\newtheorem{Def}{Definition}[section]
\newtheorem{thm}{Theorem}[section]
\newtheorem{lem}{Lemma}[section]
\newtheorem{prop}{Proposition}[section]
\newtheorem{claim}{Claim}[section]
\newtheorem{Rem}{Remark}[section]
\begin{document}

%\title{Decomposition of lens spaces with three handlebodies}
\title[Stable equivalence of a handlebody decomposition]{Stable equivalence of handlebody decompositions whose partitions are multibranched surfaces}
\author{Masaki Ogawa}
\date{}
\begin{abstract}
In this paper, we consider decompositions of closed orientable 3-manifolds with more than 3 handlebodies, where the union of intersections of handlebodies is a multibranched surface. We define stabilization operations for such decompositions and show the stable equivalence.
\end{abstract}

\maketitle
\section{Introduction}
Recently, we introduced a handlebody decomposition of a closed orientable 3-manifold \cite{IM, Poly}. 
If a 3-manifold is decomposed into several handlebodies, then we call this decomposition a handlebody decomposition.
In such decomposition, we call a union of intersection of handlebodies a partition.
In particular, if the partition of handlebody decomposition is simple polyhedron, we say handlebody decomposition is simple.
This is a generalization of a Heegaard splitting and a trisection of a 3-manifold. 
It is well known that Heegaard splittings of the same 3-manifold are stable equivalent \cite{R, S}.
Recently,  Koenig showed a stable equivalence of a trisection of a 3-manifold in \cite{K}.
Also, in \cite{Poly}, we showed the stable equivalence of simple handlebody decompositions.
In \cite{Poly}, we consider the case where handlebody decomposition is simple. To show the stable equivalence of such handlebody decomposition, we use not only one but some types of stabilizations and moves on a simple polyhedron. Stabilizations used in \cite{Poly} are called  type 0 and type 1 stabilizations.  A type 0 stabilization is similar to a stabilization of a Heegaard splitting. 

A multibranched surface is a 2-dimensional complex such that a point in it have a regular neighborhood homeomorphic to a disk or branched point where a branched point is a point whose neighborhood as in Figure \ref{mulsurf_1}. 
\begin{figure}[h]
					\centering
					\includegraphics[scale=0.5]{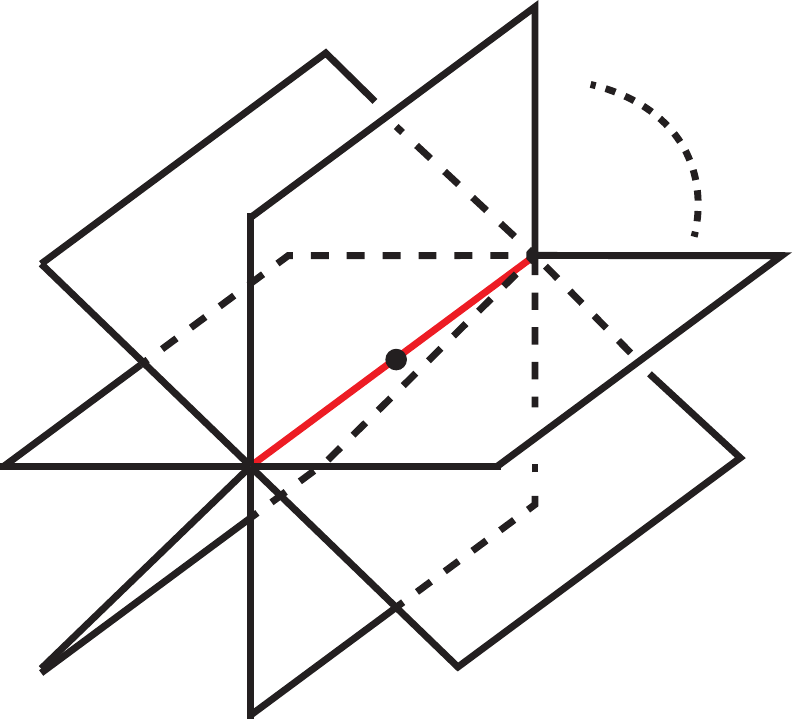}
					\caption
					{A regular neighborhood of a branched point in a multibranched surface.}
					\label{mulsurf_1}
\end{figure}
We consider the multibranched surface embedded in a 3-manifold which separates it into some handlebodies. For a multibranched surface in a 3-manifold, Ishihara, Koda, Ozawa and Shimokawa introduced IX and XI moves which do not change the regular neighborhood \cite{KOKK}. See Section 2 for the detail.

If a 3-manifold is decomposed into some handlebodies so that the union of intersections of handlebodies is a multibranched surface, we call this a multibranched handlebody decomposition.
If a multibranched handlebody decomposition consists of three handlebodies, this corresponds to a handlebody decomposition with no vertex in the partition.  On the other hand, if the number of handlebodies is greater than 4, there exists a multibranched handlebody decomposition which does not correspond to simple handlebody decomposition. 
We say two multibranched handlebody decompositions are isotopic if the union of the intersections of handlebodies is isotopic to each other.
For  such  decomposition, we show the following theorem.
\begin{thm}\label{stbthm}
	Two multibranched handlebody decompositions with four handlebodies of the same 3-manifold are isotopic to each other after applying XI  and IX moves and type 0, 1 stabilizations finitely many times. 
\end{thm}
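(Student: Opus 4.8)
The plan is to reduce the statement to the stable equivalence of \emph{simple} handlebody decompositions established in \cite{Poly}, using type $1$ stabilizations and IX/XI moves to convert each multibranched partition into a simple polyhedron. I regard a decomposition as a multibranched surface $X\subset M$ whose regular neighborhood $N(X)$ has as complement a disjoint union of collared handlebodies $H_1,\dots,H_4$; isotopy of decompositions means isotopy of $X$, and the admissible operations act on $X$ either without changing $N(X)$ (IX/XI moves) or by attaching a controlled handle (stabilizations).

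First I would examine the singular set of the partition for four handlebodies. The branch locus is a disjoint union of simple closed curves, and along each such curve some number $n\ge 3$ of sheets meet, the complementary local regions being labelled cyclically by the four handlebodies. Since $n=3$ is just the triple-line of a simple polyhedron, the only obstruction to simplicity is a branch curve with $n\ge 4$. The first step is to resolve every such curve, splitting an $n$-sheeted curve into a triple-line and an $(n-1)$-sheeted curve joined along a new face, and iterating until the singular set consists only of triple-lines, so that the partition becomes simple. The content is that each elementary resolution is realized by the admissible operations while keeping every complementary piece a handlebody: I expect that when two of the local regions share a label the resolution reconnects that handlebody to itself inside $N(X)$ and is a neighborhood-preserving IX/XI move, whereas when the relevant regions carry distinct labels the new face must be created by a type $1$ stabilization.

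Once both partitions are simple, I would invoke the main result of \cite{Poly}, by which two simple handlebody decompositions of $M$ become isotopic after type $0,1$ stabilizations and the simple-polyhedron moves used there. To phrase the conclusion in the operations permitted here, it then remains to check that each such simple-polyhedron move is a finite composition of IX/XI moves and type $0,1$ stabilizations; as each is supported near the singular graph and changes $N(X)$ only along a single handle, this is a local verification performed move by move. Concatenating the two reductions with the equivalence of \cite{Poly} then yields the theorem.

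I expect the resolution of branch curves to be the main obstacle, for two reasons. First, an $n$-sheeted branch curve admits several inequivalent resolutions, the analogues of the two smoothings of a crossing, so one must further show that the different simple decompositions so produced are themselves related by the admissible operations, making the reduction well defined up to the target equivalence. Second, one must guarantee that some resolution keeps every complementary piece a handlebody, and this is precisely where the hypothesis of \emph{four} handlebodies is essential: the excerpt already records that with more than four handlebodies a decomposition need not correspond to any simple one, so the reduction must genuinely use the four-label constraint --- presumably through the coincidences of labels forced around a branch curve --- and isolating this mechanism is the heart of the argument.
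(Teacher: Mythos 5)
Your plan takes a genuinely different route from the paper --- a reduction to the stable equivalence theorem for simple handlebody decompositions of \cite{Poly} --- but it has a gap at precisely its load-bearing step, the one you describe as ``a local verification performed move by move.'' The equivalence supplied by \cite{Poly} is a sequence of type $0,1$ stabilizations \emph{and moves on simple polyhedra}, and those moves are exactly the ones that create, cancel and rearrange true vertices of simple polyhedra. A simple polyhedron with a vertex is not a multibranched surface, so the intermediate partitions occurring in the sequence produced by \cite{Poly} in general leave the class of multibranched surfaces altogether; on such objects the operations permitted by the present theorem (IX/XI moves and the multibranched type $0,1$ stabilizations) are not even defined, so there is nothing to verify ``move by move.'' What your reduction actually requires is a global statement: every sequence of \cite{Poly}-moves and stabilizations joining two vertex-free partitions can be rerouted so that all intermediate partitions are vertex-free. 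That rerouting claim contains essentially all the difficulty of the theorem and is nowhere addressed in your plan. (A secondary point: your case analysis in the resolution step is a red herring. An XI move never requires a stabilization, whatever the labels of the sectors around the branch curve, because it leaves the regular neighborhood --- and hence all four handlebodies --- unchanged; the new annulus face lies in the same $F_{ij}$ as the region that is pulled off. This resolution step is also how the paper's proof begins.)

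For contrast, the paper avoids \cite{Poly} entirely, and the obstruction above is the reason why. After making all branch loci tribranched by XI moves, it uses type $1$ stabilizations together with Lemma \ref{lem1} (which transports an annulus component of $F_{ij}$ into $F_{kl}$ by an IX move followed by an XI move) to turn $F_{12}$, $F_{13}$, $F_{23}$ into disks, so that $H_1\cup H_2\cup H_3$ is a single handlebody and $(H_1\cup H_2\cup H_3)\cup H_4$ is a Heegaard splitting; the Reidemeister--Singer theorem \cite{R, S}, implemented by type $0$ stabilizations, then gives $H_4=H_4'$. The remaining handlebodies are matched one at a time ($H_2$, then $H_3$) by exhibiting their $1$-handles as \emph{local} $1$-handles with disjoint dual disks (Claims \ref{cl3} and \ref{cl4}), so that handle slides over the already-matched pieces make them coincide. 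Every intermediate object in this argument is itself a multibranched handlebody decomposition, which is exactly the invariance your reduction cannot maintain; salvaging your approach would mean proving the vertex-free rerouting statement, in effect a new theorem about simple polyhedra rather than a corollary of \cite{Poly}.
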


Also, we characterize 3-manifolds which have a certain multibranched handlebody decomposition.
We say that such multibranched handlebody decomposition has a type-$(g_1, . . . , g_n)$ decomposition if a handlebody $H_i$ of the decomposition has genus $g_i$ for $i=1, . . . , n$.
We  consider 3-manifolds which have a handlebody decomposition with exactly four handlebodies with small genera.
In this paper, lens space is a 3-manifold with genus one Heegaard splitting which is not homeomorphic to both the 3-sphere and  $S^2\times S^1$.
Let $\mathbb B$ be a connected sum of a finite number of  $S^2\times S^1$'s,  let $\mathbb L$ and $\mathbb L_i$ be lens spaces, and let $\mathbb S(n)$ be a Seifert manifold  with at most $n$ exceptional fibers.
Then we obtain the following theorem.

	\begin{thm}\label{thm2}
	Let $M$ be a closed orientable 3-manifold. Then the following holds.
	\begin{enumerate}

\item[(1)]
$M$ has a type-$(0, 0, 0, 0)$ decomposition if and only if $M$ is homeomorphic to $\mathbb B$.
\item[(2)]
$M$ has a type-$(0, 0, 0, 1)$ decomposition if and only if $M$ is homeomorphic to $\mathbb B$ or $\mathbb B \# \mathbb L$.
\item[(3)]
$M$ has a type-$(0, 0, 1, 1)$ decomposition if and only if $M$ is homeomorphic to $\mathbb B$ or $\mathbb B \# \mathbb L$ or $\mathbb B \# \mathbb L_{1} \# \mathbb L_{2}$.
\item[(4)] 
$M$ has a type-$(0, 1, 1, 1)$ decomposition if and only if $M$ is homeomorphic to $\mathbb B$ or $\mathbb B \# \mathbb L$ or $\mathbb B \# \mathbb L_{1} \# \mathbb L_{2}$ or $\mathbb B \# \mathbb L_{1} \# \mathbb L_{2} \# \mathbb L_{3} $ or $\mathbb B \# \mathbb S(3)$.
\end{enumerate}
	\end{thm}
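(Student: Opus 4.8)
The plan is to prove each equivalence by treating the ``if'' direction through explicit constructions and the ``only if'' direction through a structural analysis of the four--handlebody partition, organised so that the four cases form an increasing hierarchy governed by the total genus $g_2+g_3+g_4$ carried by the solid tori. First I would fix notation for the partition: write $f_{ij}=H_i\cap H_j$ for the faces and record the branch circles, along which three (or all four) of the handlebodies meet, so that each boundary surface $\partial H_i$ is tiled by the faces $f_{ij}$ glued along branch circles. A preliminary normalisation step, using isotopy and standard simplifications of the partition that do not change $M$ (removing trivial disc faces and making the faces incompressible), would put the decomposition into a form in which the Euler characteristic identity relating $\sum_i\chi(\partial H_i)=\sum_i(2-2g_i)$ to the faces and branch circles leaves only finitely many combinatorial types to examine in each case.

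The core of the ``only if'' direction is a fundamental group computation by van Kampen applied to the cover of $M$ by regular neighbourhoods of the $H_i$. Since every genus--$0$ handlebody is simply connected it contributes no generator, while each genus--$1$ handlebody contributes a single generator $x_i$, and the faces and branch circles contribute the relations. I expect to read off that, generically, $\pi_1(M)$ is a free product of cyclic groups $F_k\ast(\mathbb Z/n_1)\ast\cdots\ast(\mathbb Z/n_m)$, the free factors arising from non-separating spheres and the finite cyclic factors from meridian discs of the solid tori. Invoking the prime decomposition theorem, together with the elliptisation/Poincar\'e results needed to identify the finite factors (whose genus bound forces them to be lens spaces), then yields that such an $M$ is a connected sum of copies of $S^2\times S^1$ with at most $g_2+g_3+g_4$ lens summands. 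This gives statements (1), (2), (3) and the connected-sum alternatives of (4) at once: the number of lens summands is capped by the number of solid tori, so two solid tori can create at most two independent meridian relations.

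The one configuration in which this free-product picture fails, and hence the main obstacle, is case (4): the three generators $x_2,x_3,x_4$ need not split off as a free product but may instead satisfy a common central relation, producing the fundamental group of a Seifert fibred space over $S^2$ with three exceptional fibres. The hard part is therefore to show that this is the \emph{only} non-free-product possibility and that it is realised exactly by $\mathbb B\#\mathbb S(3)$. I would argue geometrically rather than purely group-theoretically: the ball $H_1$ forces the underlying base orbifold to be planar, hence $S^2$; each solid torus can contribute at most one exceptional fibre, so there are at most three, giving $\mathbb S(3)$ and excluding any Seifert piece over a higher-genus base or any graph manifold with several Seifert pieces (both ruled out by the genus budget). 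The companion observation is that this case cannot occur in (2) or (3), because a Seifert fibration over $S^2$ with at most two exceptional fibres is already a lens space; this is precisely why $\mathbb S(3)$ appears for the first time at type $(0,1,1,1)$.

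For the ``if'' direction I would start from a base type-$(0,0,0,0)$ decomposition of $S^3$ obtained from a standard ``tetrahedral'' multibranched surface, and then proceed as follows: first, realise each connected summand $S^2\times S^1$ by a type-$0$ or type-$1$ stabilisation, keeping all genera equal to $0$, so as to obtain every $\mathbb B$; next, raise one genus from $0$ to $1$ and glue the solid torus so that its meridian dies along a face, inserting one lens-space summand, and iterate to reach $\mathbb B\#\mathbb L$, $\mathbb B\#\mathbb L_1\#\mathbb L_2$ and $\mathbb B\#\mathbb L_1\#\mathbb L_2\#\mathbb L_3$; finally, for $\mathbb B\#\mathbb S(3)$, build the three solid tori as fibred neighbourhoods of three exceptional fibres over a thrice-punctured disc base and let the ball $H_1$ together with vertical annuli in the partition recover the Seifert fibration. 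Checking that this last construction indeed assembles to $\mathbb S(3)$, and that it uses a genuine multibranched (non-simple) partition rather than a simple one, is the delicate verification I would carry out at the end.
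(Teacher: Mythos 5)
Your proposal diverges fundamentally from the paper, and the divergence exposes a genuine gap. The paper's proof has one key geometric ingredient: Proposition \ref{prop1}, which shows that the genus-$0$ handlebody can be \emph{absorbed} into the other three pieces (by a careful cut-and-paste along regular neighborhoods of arcs that preserves the homeomorphism types of the handlebodies), so that a type-$(0,g_2,g_3,g_4)$ multibranched decomposition yields a type-$(g_2,g_3,g_4)$ decomposition with three handlebodies; the classification is then quoted from G\'omez-Larra\~naga (Proposition \ref{GTh}). You skip this reduction and instead try to classify the four-handlebody decompositions directly by van Kampen. But the step ``I expect to read off that, generically, $\pi_1(M)$ is a free product of cyclic groups'' is precisely the hard content: it is essentially G\'omez-Larra\~naga's theorem itself, and nothing in your sketch proves it. The faces $F_{ij}$ are in general disconnected surfaces of arbitrary topology, so the presentation has far more generators than $g_2+g_3+g_4$ (indeed $\mathbb B$ with arbitrarily many $S^2\times S^1$ summands admits a type-$(0,0,0,0)$ decomposition), and controlling which factors are torsion and whether the group splits as a free product cannot be done by counting generators. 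Your treatment of the exceptional case is also circular: the claim ``the ball $H_1$ forces the underlying base orbifold to be planar'' presupposes a Seifert fibration on (a summand of) $M$, which is exactly what needs to be constructed; in the sources, the Seifert structure emerges geometrically only after one shows all remaining faces can be taken to be annuli. Finally, your route needs elliptization to promote a finite cyclic fundamental group to a lens space, heavy machinery that the paper's argument (via the 1987 combinatorial classification) never invokes.

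The ``if'' direction as written is incorrect on its own terms: type $0$ and type $1$ stabilizations are moves on decompositions of a \emph{fixed} $3$-manifold, so they can never ``realise'' a connected summand $S^2\times S^1$; moreover each stabilization raises genus (type $0$ raises two of the $g_i$ by one, type $1$ raises one of them by one), so the phrase ``keeping all genera equal to $0$'' is contradictory. The correct, and easy, construction is the one implicit in the paper: by Proposition \ref{GTh} each listed manifold has a type-$(g_2,g_3,g_4)$ decomposition into three handlebodies, and one obtains the required four-handlebody decomposition of type $(0,g_2,g_3,g_4)$ by splitting a $3$-ball off one of the handlebodies along a trivial disk chosen to meet the partition appropriately.
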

	
	Also we characterize  3-manifolds with  type-$(1, 1, 1, 1)$ decomposition.
\begin{thm}
Let $M$ be a closed orientable 3-manifold. 
Then $M$ has a type-$(1, 1, 1, 1)$ decomposition if and only if $M$ is homeomorphic to $\mathbb B$ or $\mathbb B \# \mathbb L$ or $\mathbb B \# \mathbb L_{1} \# \mathbb L_{2}$ or $\mathbb B \# \mathbb L_{1} \# \mathbb L_{2} \# \mathbb L_{3} $ or  $\mathbb B \# \mathbb L_{1} \# \mathbb L_{2} \# \mathbb L_{3} \# \mathbb L_{4}$ or $\mathbb B \# \mathbb S(4)$.
\end{thm}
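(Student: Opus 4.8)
The plan is to mirror the structure of Theorem \ref{thm2}, treating the type-$(1,1,1,1)$ case as the top of the same hierarchy. First I would set up the machinery common to all these classifications: given a type-$(g_1,\dots,g_4)$ multibranched handlebody decomposition of $M$ with partition $P$, I would analyze the combinatorics of $P$ as a multibranched surface separating the four handlebodies $H_1,\dots,H_4$. The key local data is the set of branch curves (the 1-dimensional singular locus of $P$) and the way the four handlebodies are glued along sheets of $P$. Because each $H_i$ is a genus-one handlebody (a solid torus), each contributes a meridian disk system of rank one, and I expect the proof to hinge on understanding how these four solid tori are amalgamated along $P$.

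For the ``if'' direction, I would exhibit an explicit type-$(1,1,1,1)$ decomposition for each manifold in the list. For the connected sum $\mathbb B \# \mathbb L_1 \# \cdots \# \mathbb L_k$ with $k \le 4$, I would build the decomposition by starting from a standard type-$(1,1,1,1)$ decomposition of $S^3$ and inserting the lens-space and $S^2\times S^1$ summands via local modifications supported near one handlebody, using the fact that a genus-one Heegaard splitting of a lens space can be absorbed into a single solid torus of the decomposition. For $\mathbb B \# \mathbb S(4)$, I would use the description of a Seifert manifold with at most four exceptional fibers as glued from solid tori over a base surface: the four solid tori carrying the exceptional fibers naturally become the four genus-one handlebodies, and the partition records the Seifert fibering structure over the base with its branch locus at the exceptional fibers. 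This is the step where the number four in $\mathbb S(4)$ and the number four of handlebodies should coincide naturally.

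For the ``only if'' direction, I would argue that any type-$(1,1,1,1)$ decomposition forces $M$ into the stated list. The natural approach is to amalgamate handlebodies two at a time along the partition and track the resulting genus. Each pairwise amalgamation of two solid tori along a subsurface of $P$ produces a compression body or handlebody of controlled genus, and by induction the total manifold is assembled from at most four rank-one pieces. I would split into cases according to whether the partition $P$ is a manifold (no branch curves), in which case the decomposition reduces essentially to the simple or Heegaard-type situation already understood and yields the connected-sum-of-lens-spaces family, or whether $P$ has genuine branch curves, in which case the branching forces a Seifert structure with exceptional fibers arising from the branch curves — giving the $\mathbb B \# \mathbb S(4)$ possibility. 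Here I would invoke the classification of Seifert fibered spaces and the recognition that a genus-one handlebody fibered over a disk or annulus contributes at most one exceptional fiber.

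The hard part will be the ``only if'' direction in the branched case: ruling out manifolds outside the list when $P$ carries branch curves. The obstacle is that branch curves allow the four solid tori to be glued in topologically richer ways than in the unbranched (Heegaard-type) setting, and I must show these richer gluings do not produce new manifolds beyond $\mathbb B \# \mathbb S(4)$ and the connected-sum family. I expect to control this by a careful analysis of the regular neighborhood of the branch locus, using the XI and IX moves from Section 2 to normalize the branching and reduce to a standard configuration, and by bounding the number of exceptional fibers by the number of branch curves that can be carried by four genus-one handlebodies — this count is precisely what caps the Seifert piece at $\mathbb S(4)$ rather than $\mathbb S(3)$ as in the type-$(0,1,1,1)$ case.
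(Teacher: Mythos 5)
Your case dichotomy for the ``only if'' direction --- partition is a manifold versus partition has branch curves --- does not match the actual topology, and the argument built on it fails. If the partition had no branch curves, every nonempty $F_{ij}$ would be a closed subsurface of the torus $\partial H_i$, hence the whole torus; then $H_i\cup H_j$ would already be a closed 3-manifold, so by connectedness $M=H_i\cup H_j$ and there would be no room for the other two handlebodies. So your ``unbranched'' case is essentially vacuous, and in particular the connected-sum-of-lens-spaces family cannot be produced there. Conversely, your claim that genuine branch curves force a Seifert structure is false: a disk component of some $F_{ij}$ (whose boundary \emph{is} a branch curve) produces a connected-sum splitting, not a Seifert fibration. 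Both of the paper's cases carry branch curves; the correct dichotomy is whether some $F_{ij}$ contains a disk component or all components of all $F_{ij}$ are annuli.

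The missing idea, which is the engine of the paper's proof, is the analysis of disk components. If $D$ is a disk component of $F_{ij}$, look at $\partial D$ in the boundary torus of an adjacent handlebody $H_k$: if $\partial D$ is inessential there, then $D$ together with a disk in $\partial H_k$ is a sphere yielding $M\cong M'\# (S^2\times S^1)$ (or a lens space summand with a drop to type $(0,0,1,1)$), and the decomposition of $M'$ is repaired to one of type $(1,1,1,1)$ with tribranched partition (the paper's Claim 5.5, which takes most of the work); if $\partial D$ is essential, then a neighborhood of $D$ attached to the adjacent solid torus is a punctured lens space, so $M\cong \mathbb{L}\# M'$ where $M'$ has a type-$(0,1,1,1)$ decomposition, and one concludes by the already established classification for that type (Theorem 1.2(4), via G\'omez-Larra\~naga). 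Only when no disk components exist does an Euler characteristic count on the boundary tori show that every $F_{ij}$ consists of annuli, whence $M$ is Seifert with at most four exceptional fibers; this is where $\mathbb{B}\#\mathbb{S}(4)$ enters. Your substitute mechanism --- ``amalgamate handlebodies two at a time and track the genus'' --- does not work as stated: two solid tori glued along an essential annulus need not form a handlebody or compression body, so there is no induction on rank-one pieces to run. (Your ``if''-direction constructions are plausible and are in fact more than the paper supplies, but the substance of the theorem lies in the reduction just described.)
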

We call the union of intersections of handlebodies of handlebody decomposition a {\it partition}.
If a partition is a simple polyhedron, any closed orientable 3-manifolds has type-$(0, 0, 0, 0)$ decomposition. Hence Theorem \ref{thm2} shows a difference between multibranched handlebody  decomposition and a handlebody decomposition with  simple polyhedron.

 This paper is organized as follows.
In Section 2, we introduce a multibranched surface and its moves. In Section 3, we describe the notion of multibranched handlebody decomposition and its stabilizations. After that, we show stableequivalence theorem in Section 4. After that, we  characterize a 3-manifolds by  multibranched handlebody decomposition with four handlebodies  in Section 5.

\section{multi-branched surface}
Let $\Bbb{R}^2_+$ be the closed upper half-plane $\{(x_1,x_2)\in \Bbb{R}^2 \mid x_2\ge 0\}$.
The  multibranched Euclidean plane, denoted by $\Bbb{R}^2_i$ $(i\ge 1)$, is the quotient space obtained from $i$ copies of $\Bbb{R}^2_+$ by identifying with their boundaries $\partial \Bbb{R}^2_+=\{(x_1,x_2)\in\Bbb{R}^2\mid x_2=0\}$ via the identity map.

\begin{Def}
A second countable Hausdorff space $X$ is called a multibranched surface if $X$ contains a disjoint union of simple closed curves $l_1,\ldots, l_n$ satisfying the following:
\begin{enumerate}
\item For each point $x\in l_1\cup \cdots \cup l_n$, there exists an open neighborhood $U$ of $x$ and a positive integer $i$ such that $U$ is homeomorphic to $\Bbb{R}^2_i$.
\item For each point $x\in X-(l_1\cup\cdots\cup l_n)$, there exists an open neighborhood $U$ of $x$ such that $U$ is homeomorphic to $\Bbb{R}^2$.
\end{enumerate}
We call $l_i$ a branched locus. The surfaces divided by branched loci are called regions.
\end{Def}

Sometimes multibranched surface is studied as a 2-stratifolds \cite{GG}.  A multibranched surface has been studied recently \cite{MO, O, GG}.  Ishihara, Koda, Ozawa and Shimokawa introduced the moves of multi-branched surface which does not change its regular neighborhood  \cite{KOKK}. We shall review the definition of XI and IX moves.

Let $X$ be a multibranched surface with brach loci $B=B_1\cup\cdots\cup B_m$ and regions $S=S_1\cup \cdots\cup S_n$, 
where $S$ is a (possibly disconnected or/and non-orientable) compact surface without disk components such that each component $S_j$ ($j=1,\ldots,n$) has a non-empty boundary.
Each point $x$ in $\partial S$ is identified with a point $f(x)$ in $B$ by a covering map $f:\partial S\to B$,
where $f|_{f^{-1}(B_i)}:f^{-1}(B_i)\to B_i$ is a $d_i$-fold covering ($d_i>2$). 
We call $d_i$ the {\it degree} of $B_i$.
We say that $B_i$ is {\it tribranched} or a {\it tribranch locus} if $d_i=3$. 
If all the branched loci in the multibranched surface are tribranched, we call it {\it tribranched surface}.
For each component $C$ of $\partial S$, the {\it wrapping number} of $C$ is $w_{C}$ if $f|_{C}$ is a $w_C$-fold covering for the branch locus $f(C)$.
Suppose $X$ is embedded in an orientable $3$-manifold $M$.
By \cite{MO}, then for each branch locus $B_i$ of $X$, the wrapping number of all components of $f^{-1}(B_i)$ is a divisor of $d_i$. 
We call the divisor $w_i$ the {\it wrapping number} of $B_i$. 
We say a branch locus $B_i$ is {\it normal} (resp. {\it pure}) if $w_i=1$ (resp. $d_i=w_i$).
In this paper, we assume that all the branched loci in a multibranched surface are normal.

\begin{Def}
Let $A$ be an annulus region of a multibranched surface $X$.
Suppose that each component of $\partial A$ is a tribranched locus. 
Then we can obtain another multibranched surface from $X$ by performing deformation retraction  of $A$ to the core circle of $A$ which sends $\partial A$ to a degree 4 branched locus.
We call this operation an {\it IX move} along $A$.

Let  $l$ be a branched locus of $X$ with degree 4, $S$ a  region  whose boundary contains $l$ and $A'$  a regular neighborhood of $l$ in $S$.
Then, we can consider the reverse operation of an IX move called {\it XI move} along $A'$. See Figure \ref{XI}.
\begin{figure}[httb]
					\centering
					\includegraphics[scale=0.5]{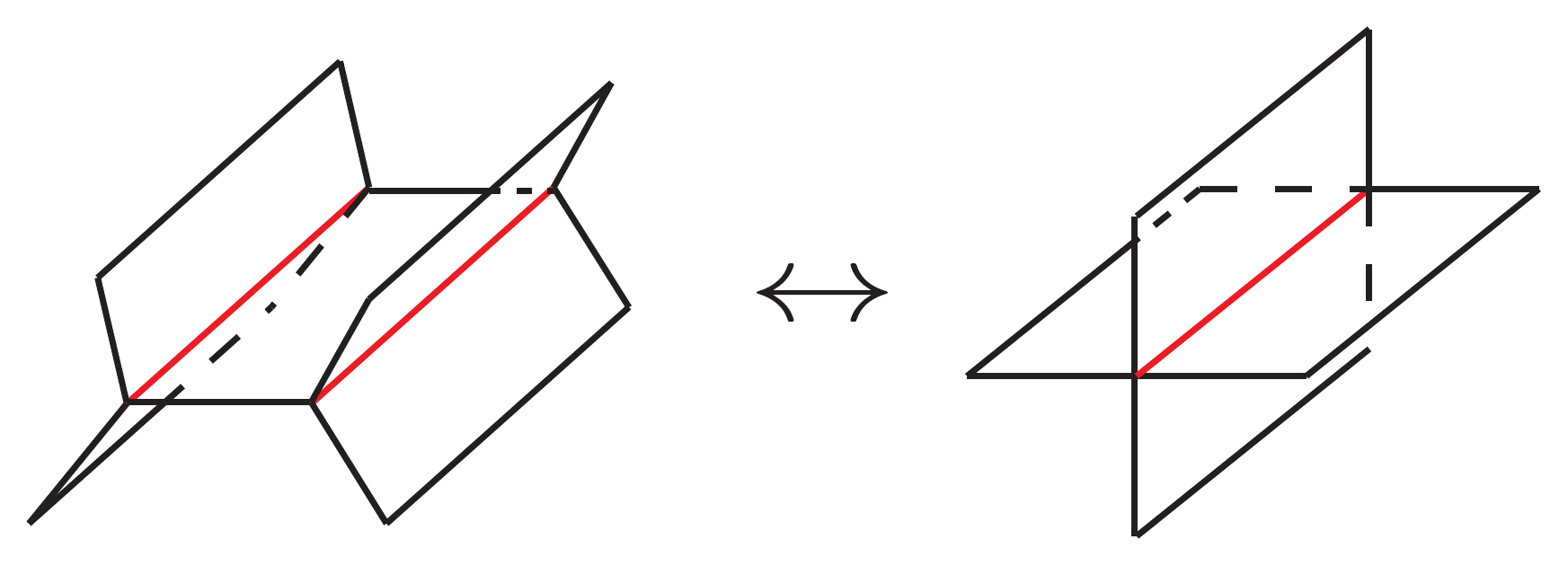}
					\caption
					{IX and XI move on a multibranched surface.}
					\label{XI}
			\end{figure}

\end{Def}
\begin{thm}[Theorem 1 in \cite{KOKK}]
Let $X, X'$ be multibranched surfaces in an orientable 3-manifold $M$, and let $N, N'$ be their regular neighborhoods respectively. If $N$ is isotopic to $N'$, then $X$ is transformed into $X'$ by a finite sequence of IX moves and XI moves and isotopies.
\end{thm}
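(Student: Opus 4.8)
The plan is to reduce the statement to a comparison of two spines of a single fixed 3-manifold and then to separate the cheap combinatorial content from the genuinely topological content. First, since $N$ and $N'$ are isotopic in $M$, I would apply the isotopy extension theorem to arrange $N=N'=:N$ as the same compact submanifold of $M$. The theorem then reduces to the following: if $X$ and $X'$ are two multibranched surfaces that are both spines of $N$, with $N$ a regular neighborhood of each, then $X$ and $X'$ are related by IX and XI moves together with an ambient isotopy of $N$ (afterwards realized back in $M$).

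Next I would analyze the local structure of a regular neighborhood of a normal multibranched surface. Over a region $S_j$ the neighborhood $N$ restricts to a (possibly twisted) $I$-bundle over $S_j$, a \emph{horizontal} piece; over a branch locus $B_i$ of degree $d_i$ it restricts to a solid torus $V_i$ whose boundary torus carries $d_i$ \emph{attaching} annuli, along which the horizontal pieces are glued, the complementary annuli lying in $\partial N$. Transverse to $B_i$ the surface $X$ looks like the cone on $d_i$ points, namely the cross-section of $\Bbb{R}^2_{d_i}$ (a $d_i$-valent star), so $X\cap V_i$ is this star swept around the core circle. In this way $N$ acquires a decomposition into horizontal sheet-bundles and vertical branch solid tori meeting along essential annuli, and recovering $X$ amounts to recording, inside each $V_i$, the tree that organizes the incoming attaching annuli into a star.

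The combinatorial heart is then short. Assuming the decomposition is matched for $X$ and $X'$ (see below), at each branch solid torus the two surfaces determine trees with the \emph{same} leaf set, namely the attaching annuli of that torus. An IX move contracts an internal edge of such a tree, merging two branch loci joined by an annulus region into a single branch locus of larger degree (Figure \ref{XI} shows the basic tribranched instance $3,3\mapsto 4$), and an XI move is the reverse expansion. Contracting repeatedly brings any tree to the canonical one-vertex form, a single maximal star per branch solid torus; since both $X$ and $X'$ reduce to this same canonical form, they are related by IX moves followed by inverse XI moves, up to isotopy. Equivalently, the relevant flip graph of trees on a fixed leaf set is connected under these moves, which is classical.

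The main obstacle is the step I postponed: proving that the decomposition of $N$ into horizontal sheet-bundles and vertical branch solid tori is essentially unique, so that the sheets and branch solid tori of $X$ can be isotoped to coincide with those of $X'$ and the trees above genuinely share a leaf set. This is a Haken/JSJ-type uniqueness problem for the system of attaching annuli, and the care lies in the degenerate pieces: regions that are annuli (whose $I$-bundles are themselves solid tori, precisely the pieces the IX and XI moves act on), boundary-parallel or otherwise inessential annuli, and the constraints forced by the normality assumption $w_i=1$. These degeneracies must be absorbed into the moves rather than obstruct the matching. Once this uniqueness is in hand, the combinatorial reduction of the previous paragraph completes the argument.
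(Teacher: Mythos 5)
First, a point of order: this statement is not proved in the paper at all. It is quoted verbatim as Theorem 1 of \cite{KOKK} and used as a black box, so there is no internal proof to compare your attempt against; your proposal has to be judged as a reconstruction of the proof in \cite{KOKK}. Judged that way, it has a genuine gap, and it is precisely the step you postpone. The reduction via isotopy extension to two spines $X$, $X'$ of one fixed $N$ is fine, and the local picture of $N$ as $I$-bundles over regions glued to solid tori around branch loci is fine (note it silently uses the normality hypothesis $w_i=1$ to get the ``star times $S^1$'' model; the theorem in \cite{KOKK} is stated without that hypothesis, where the sheets can have monodromy around a locus). But the assertion that the two induced decompositions of $N$ can be matched up to isotopy, so that your trees ``genuinely share a leaf set,'' is not a deferrable Haken/JSJ-type lemma --- it is the theorem. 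The decomposition is genuinely non-unique: an annulus region has an $I$-bundle neighborhood that is itself a solid torus, which can be absorbed into, or split off from, the adjacent branch solid tori, and this is exactly the ambiguity the IX and XI moves are designed to account for. So ``uniqueness of the decomposition up to the moves'' is a restatement of the conclusion, and invoking it makes the argument circular. Nor does characteristic-submanifold/JSJ theory apply off the shelf: $N$ is typically a compressible-boundary manifold (often a handlebody), the attaching annuli need not be essential, and uniqueness of annulus systems is exactly what fails in that setting. All the real work of \cite{KOKK} lives in this step, and your proposal contains no strategy for it beyond naming it.

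A secondary, repairable problem: with the moves as defined here, an IX move requires \emph{both} boundary circles of the annulus region to be tribranched and produces a degree-$4$ locus, and XI is its inverse; no move ever creates a branch locus of degree greater than $4$. Hence your plan of contracting each tree to a ``canonical one-vertex form'' (a single maximal star) is not licensed by the moves when the leaf set has more than four elements. The correct canonical objects are the trivalent trees: expand every higher-degree vertex by XI moves and then use connectivity of the nearest-neighbor-interchange flip graph (each NNI being an IX move followed by a different XI move), which is the classical fact you want. This fix is routine, but it does not touch the main gap above.
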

%=================================================================================================================================================================================================
\section{Handlebody decomposition whose partition is multi-branched surface}
Ishihara, Mishina, Koda, Ozawa, Sakata, Shimokawa and author introduced a handlebody decomposition of a 3-manifold whose partition is a simple polyhedron and showed the stable equivalent theorem for such decompositions \cite{Poly}. To show the stable equivalent theorem of handlebody decompositions, we use two  stabilizations and two moves of a simple polyhedron. In this section, we introduce the new decomposition of 3-manifolds called a multibranched handlebody decomposition.

\begin{Def}[Multibranched handlebody decomposition]
	Let $M$ be a closed orientable 3-manifold and $H_i$ a genus $g_i$ handlebody embedded in $M$ for $i=1, . . . , n$. $M=H_1\cup\cdots \cup H_n$ is a type-$(g_1, g_2, . . . , g_n)$ {\it multibranched handlebody decomposition} if the followings hold;
	\begin{enumerate}
		\item $H_i\cap H_j=\partial H_i\cap \partial H_j$ is a union of  possibly disconnected compact surfaces and simple closed curves. We denote $F_{ij}=H_i\cap H_j$.
		\item $H_{i1}\cap \cdots \cap H_{ik}$ is a union of simple closed curves in $M$ or emptyset for $k\geq 3$ and $\{i_1, . . . , i_k\}\subset \{1, . . . , n\}$. We call this simple closed curve branched locus.
	\end{enumerate}
	We call the union of $F_{ij}$ for all $i\neq j$ the {\it partition} of a multibranched handlebody decomposition. 
	We say that two multibranched handlebody decompositions of the same 3-manifold are isotopic to each other if each partition is isotopic to each other.
\end{Def}

\begin{Rem}
 It is clear that partition of a multibranched handlebody decomposition is a multibranched surface in $M$.
 Also, all the branched loci in a partition are normal since $H_i$ does not have self-intersection.
 
 We can obtain a type $(0, 0, g, g)$ multibranched handlebody decomposition from a genus $g$ Heegaard splitting. 
 Hence any closed orientable 3-manifold admits multibranched handlebody decomposition.
\end{Rem}

Let $M=H_1\cup\cdots \cup H_n$ be a multibranched handlebody decomposition and $P$  a partition of the multibranched handlebody decomposition.
Let $m$ be the maximal of degrees of branched loci. Then we say $M=H_1\cup\cdots \cup H_n$ is a {\it degree $m$ multibranched handlebody decomposition}. If $n=3$, the degree is also 3. In this paper, we consider the case where $n=4$.

In \cite{Poly}, we introduced type 0, 1 stabilizations for handlebody decomposition. We shall review the definition of stabilizations. 
\begin{Def}[stabilization]
\begin{enumerate}
\item The following operation is called a {\it type 0 stabilization} (Figure \ref{typeii}).
We take two points on the interior of $F_{ij}$ and connect them by a properly embedded boundary parallel arc $\alpha$ in $H_{i}$. Let $N(\alpha)$ be the regular neighborhood of $\alpha$ in $H_i$. we define a new handlebody decomposition $M = H_1'\cup\cdots \cup H'_{i}\cup \cdots  \cup H'_{j} \cup \cdots \cup H'_{n}$ by $H'_{i} := H_{i} \setminus int(N(\alpha))$, $H'_{j} := H_{j}\cup N(\alpha)$ and $H'_{k} := H_{k}$ for $k\neq i, j$. 
Then the n-tuple $(g_1, . . . , g_{i}, . . . ,  g_{j}, . . . ,  g_{n})$ is changed into $(g_1, . . . , g_{i}+1, . . . , g_{j}+1, . . . , g_{n})$ and the number of components of branched loci is not changed by this operation. 

\item The following operation is called a {\it type 1 stabilization} (Figure \ref{typeia}). 
We take two points on the branched loci and connect them by an arc $\alpha$ on $F_{jk}$. Let $N(\alpha)$ be the regular neighborhood of $\alpha$ in $M$. we define a new handlebody decomposition $M = H_1'\cup\cdots \cup H'_{i}\cup \cdots  \cup H'_{j} \cup \cdots \cup H'_{n}$ where $H'_{i} := H_{i} \cup N(\alpha)$, $H'_{j} := H_{j}\setminus int(N(\alpha))$ and $H'_{k} := H_{k}\setminus int(N(\alpha))$ $H_l'=H_l$ for $l\neq i, j, k$. 
Then the n-tuple $(g_1, . . . , g_{i}, . . . ,  g_{n})$ is changed into $(g_1, . . . , g_{i}+1, . . . , g_{n})$ and the number of components of branched loci is changed by $1$.
Conversely, if there exists a non-separating disk $D_{i} \subset H_{i}$ whose boundary intersects the set of branched loci exactly two points transversely, then $D_{i}$ can be canceled by an inverse operation of a type 1 stabilization. 
We call this operation a {\it type 1 destabilization}.
\end{enumerate}
\end{Def}
\begin{figure}[h]
\centering
\includegraphics[scale=0.5]{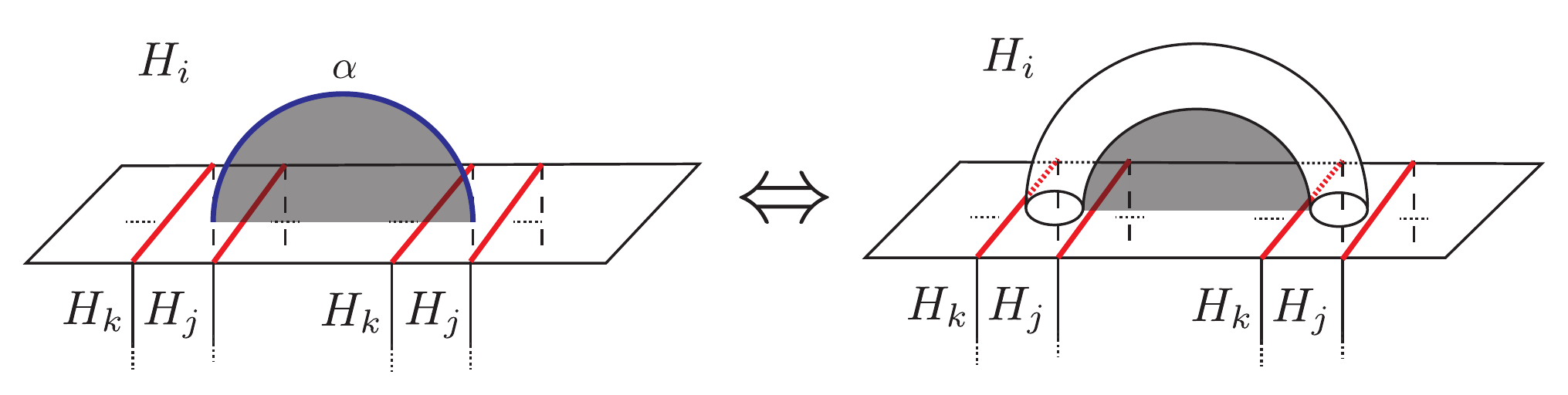}
\caption
{A type  0 stabilization along the arc $\alpha$.  A n-tuple $(g_1, . . . , g_{i}, . . . ,  g_{j}, . . . ,  g_{n})$ is changed into $(g_1, . . . , g_{i}+1, . . . , g_{j}+1, . . . , g_{n})$}
\label{typeii}
\end{figure}

\begin{figure}[h]
\centering
\includegraphics[scale=0.5]{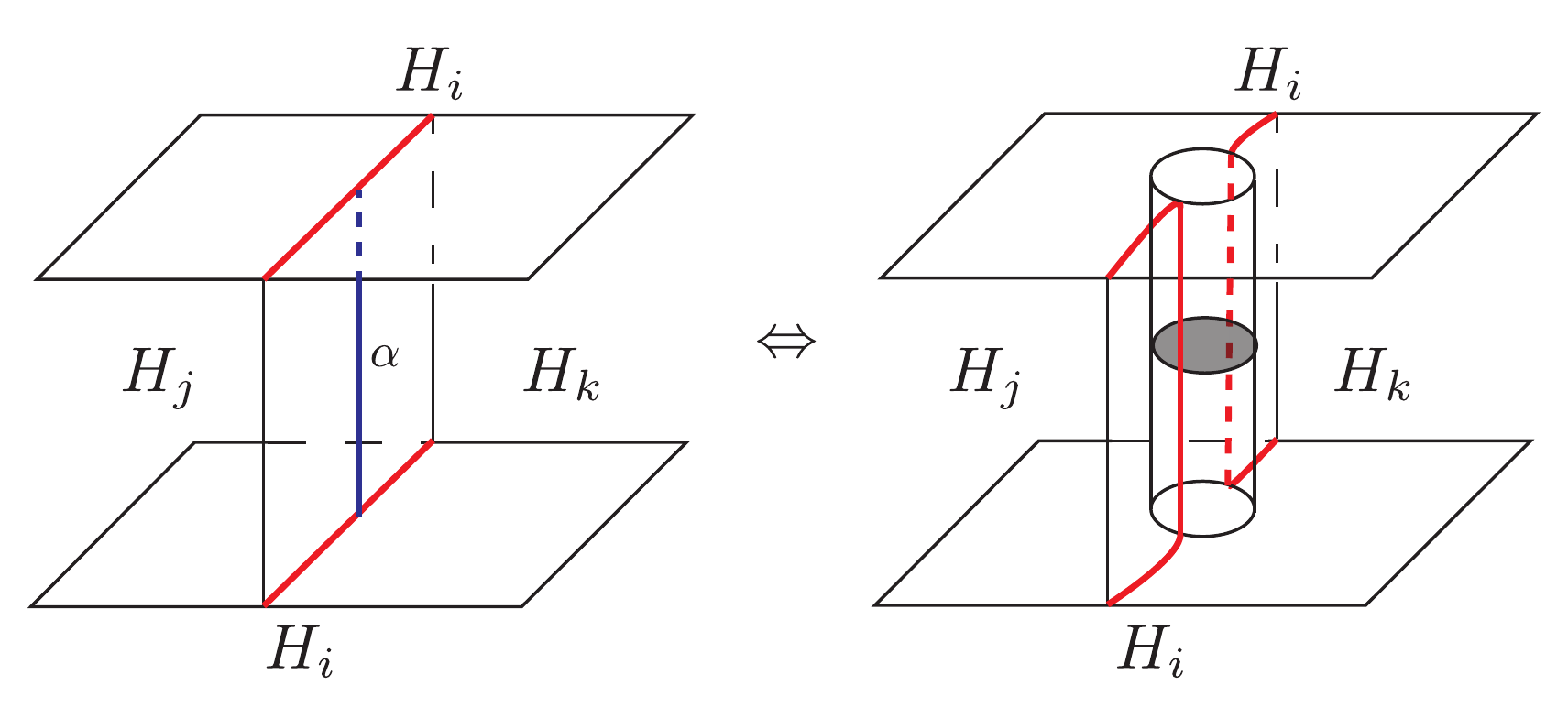}
\caption
{A type  1 stabilization along the arc $\alpha$. A n-tuple $(g_1, . . . , g_{i}, . . . ,  g_{n})$ is changed into $(g_1, . . . , g_{i}+1, . . . , g_{n})$.}
\label{typeia}
\end{figure}

%================================================================================================================================================================================================

\section{stable equivalent theorem}
In this section, we will prove Theorem  \ref{stbthm} by using stabilizations described above and XI, IX moves.
First, we consider the following lemma.
\begin{lem}\label{lem1}
	Let $X$ be a  partition of a multibranched handlebody decomposition and $A$ an annulus component of $F_{ij}$.
	Suppose that one of the components of $\partial A$ is a component of $H_i\cap H_j\cap H_l$ and the other is a component of $H_i\cap H_j \cap H_k$ for $l\neq k$.
	Also, suppose that $X$ is a tribranched surface.
	Then we can eliminate $A$ from $F_{ij}$ by XI  and IX moves so that the obtained partition $X'$ is also a tribranched surface. 
\end{lem}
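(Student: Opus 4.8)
The plan is to resolve the annulus $A$ by first merging its two boundary branch loci into a single degree-$4$ locus with an IX move, and then splitting that locus back into two tribranch loci with an XI move performed along the \emph{other} diagonal, so that the reappearing annulus lies in $F_{lk}$ rather than in $F_{ij}$. Since both components of $\partial A$ lie on triple intersections and $X$ is tribranched, each component of $\partial A$ is a branch locus of degree $3$; because $l\neq k$ these are two distinct loci. Hence the IX move along $A$ is legitimate: it deformation retracts $A$ onto its core circle $c$, which becomes a single degree-$4$ locus, and the annular region $A\subset F_{ij}$ disappears. The four regions now incident to $c$ are $F_{il}$, $F_{jl}$, $F_{ik}$, $F_{jk}$.

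The key step is to read off the cyclic arrangement of these four regions, equivalently of the four handlebodies $H_i, H_j, H_k, H_l$, around $c$. Working in a disk transverse to $A$, the region $A$ locally separates $H_i$ from $H_j$, while at the $l$-end the third wedge is $H_l$ (bounded by $F_{il}$ and $F_{jl}$) and at the $k$-end the third wedge is $H_k$ (bounded by $F_{ik}$ and $F_{jk}$). Retracting $A$ brings these two transverse pictures together, so that around $c$ the handlebodies occur cyclically as $H_i, H_l, H_j, H_k$, with $H_i$ and $H_j$ (the two former sides of $A$) in opposite positions and $H_l, H_k$ likewise opposite, the regions appearing cyclically as $F_{il}, F_{jl}, F_{jk}, F_{ik}$. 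Verifying this cyclic order is the main point of the argument, and it is where one must keep careful track of the local picture; everything else is formal.

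A degree-$4$ locus admits exactly two resolutions by an XI move, corresponding to the two ways of pairing cyclically adjacent regions. One pairing, $\{F_{il},F_{jl}\}$ and $\{F_{ik},F_{jk}\}$, merely reverses the preceding IX move and restores $A\subset F_{ij}$. I would instead apply the XI move realizing the \emph{other} pairing, $\{F_{il},F_{ik}\}$ and $\{F_{jl},F_{jk}\}$; equivalently, I insert the new annulus between the opposite wedges $H_l$ and $H_k$. This splits $c$ into two branch loci, one incident to $F_{il}$, $F_{ik}$ and the new annulus, the other incident to $F_{jl}$, $F_{jk}$ and the new annulus. The first therefore lies on $H_i\cap H_l\cap H_k$ and the second on $H_j\cap H_l\cap H_k$, and the new annulus is a component of $F_{lk}=H_l\cap H_k$.

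Both new loci have degree $3$, so the resulting partition $X'$ is again tribranched, while the offending annulus has been removed from $F_{ij}$, having migrated to $F_{lk}$. Since IX and XI moves preserve the regular neighborhood of the partition, $X'$ is still the partition of a multibranched handlebody decomposition with the same handlebodies, as required. The hypothesis $l\neq k$ is essential: it guarantees both that the two boundary loci of $A$ are distinct (so that the IX move is defined and merges them into one locus) and that the two opposite wedges to be separated, $H_l$ and $H_k$, are genuinely different handlebodies (so that the second XI resolution makes sense). The main obstacle is thus the local bookkeeping of the second paragraph, namely confirming the cyclic order of the four regions around $c$ and hence that the desired XI resolution is legal and yields a tribranched surface; the globalization is immediate because both moves are supported in a neighborhood of $A$.
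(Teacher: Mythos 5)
Your proposal is correct and follows essentially the same route as the paper: perform an IX move along $A$ to merge its two tribranch boundary loci into a single degree-$4$ locus, then apply the XI move realizing the opposite resolution, so that the new annulus appears in $F_{lk}$ and both new loci are tribranched (the paper specifies this second move as an XI move along a regular neighborhood of the degree-$4$ locus in a component of $F_{jl}$, which yields exactly the pairing $\{F_{il},F_{ik}\}$, $\{F_{jl},F_{jk}\}$ you describe). Your explicit verification of the cyclic order of the four regions around the core is the content the paper delegates to its Figure, and your bookkeeping agrees with it.
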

\begin{Rem}
	After eliminating $A$ in Lemma \ref{lem1}, $F_{kl}$ shall have a new annulus component.
\end{Rem}
\begin{proof}[Proof of Lemma \ref{lem1}]
	After performing IX move along $A$, we can eliminate $A$ from $F_{ij}$ and obtain the branched locus $l$ with degree 4.
	By the assumption that X is a tribranched surface, $l=H_i\cap H_j\cap H_k\cap H_l$.
	Then there is a component of $F_{jl}$ whose boundary contains $l$.
	Then we can take a regular neighborhood  $A'$ of $l$ in the components of $F_{jl}$.
	An XI moves along $A'$ gives a tribranched surface X' as in conclusion.
	See Figure \ref{Fig_1}.
				\begin{figure}[h]
					\centering
					\includegraphics[scale=0.4]{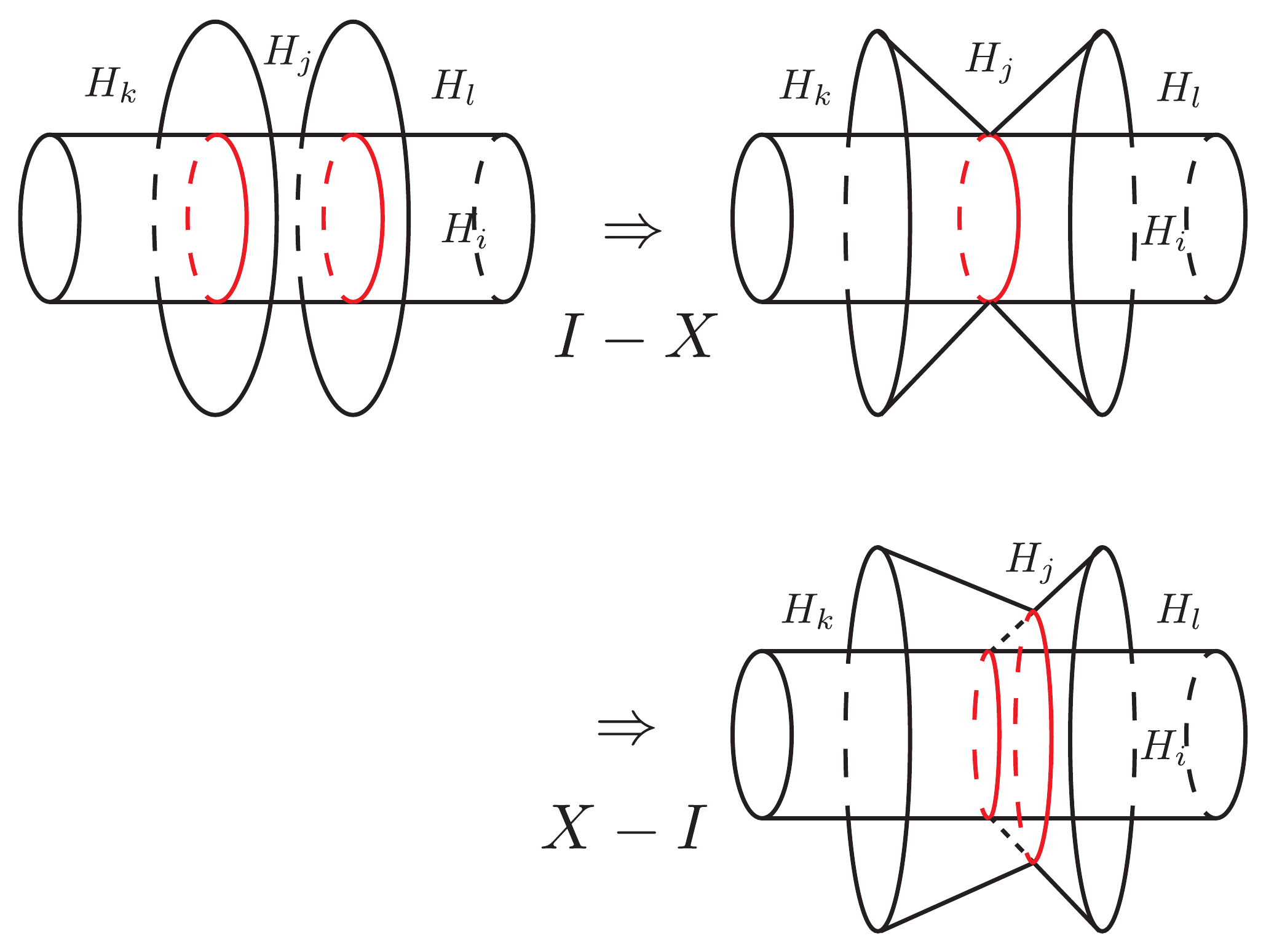}
					\caption
					{XI  and IX moves which deform $A$ into empty set.}
					\label{Fig_1}
			\end{figure}
\end{proof}

To prove the stable equivalence theorem, we shall define a local 1-handle attached to a handlebody.
\begin{Def}
	Let $M$ be a closed orientable 3-manifold and $H$ a handlebody embedded in $M$.
	We say a 1-handle $h$ attached to $\partial H$ is  local for $H$ if there exists a disk in the exterior of $H\cup h$ whose boundary intersects the boundary of a cocore of $h$ at one point.
	We call $D$ a dual disk of the pair $(H, h)$.
	\begin{figure}[httb]
					\centering
					\includegraphics[scale=0.5]{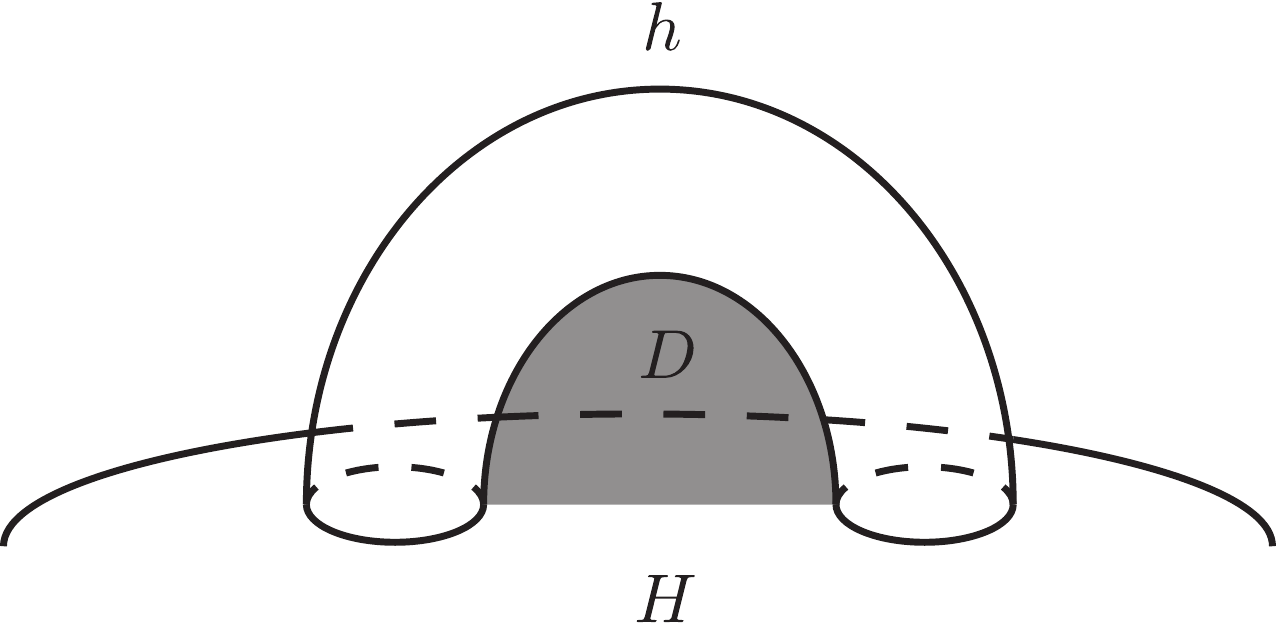}
					\caption{A 1-handle $h$ which is local for $H$}
					
			\end{figure}
\end{Def}
We note that two local 1-handles attached to the same handlebody is isotopic to each other after performing the handleslide on the handlebody.

%\begin{Def}
%	Let $M$ a closed orientable 3-manifold and $H$ a handlebody embedded in $M$, $H'$ be a handlebody attached to $\partial H$ at a disk and $\{D_1, . . . , D_g\}$ a complete meridian disk system of $H'$.
%	We say $H'$ is local for $H$ if there exists union of disjoint disks $E_1, . . . . , E_g$ in the exterior of $H\cup H'$ such that each of $E_i\cap D_i$ is exactly one point, $E_i\cap D_j=\emptyset$ for $i\neq j$ and each of $E_i\cap H$ and $\partial E_i\cap H'$ is an arc for $i=1, . . . , g$.
%\end{Def}
%We note that two local handlebodies attached to the same handlebody is isotopic to each other after performing the handleslide on the handlebody if the genera of the handlenbodies are same.

The proof of Theorem \ref{stbthm} is divided into the following steps.
Let $M=H_1\cup H_2\cup H_3\cup H_4$ and $M=H_1'\cup H_2'\cup H_3'\cup H_4'$ be two multibranched handlebody decompositions.
\begin{enumerate}
	\item[Step 1:] We deform each of $F_{12}$, $F_{13}$, $F_{23}$, $F_{12}'$, $F_{13}'$ and $F_{23}'$ to a disk by performing type 1 stabilizations and XI, IX moves. Then $(H_1\cup H_2\cup H_3)\cup H_4$ and $(H_1'\cup H_2'\cup H_3')\cup H_4'$ become Heegaard splittings. Perform type 0 stabilizations until two Heegaard splittings $(H_1\cup H_2\cup H_3)\cup H_4$ and $(H_1'\cup H_2'\cup H_3')\cup H_4'$ are isotopic. Then we obtain $H_4=H_4'$.
	\item[Step 2:] Perform type 1 stabilizations and IX, XI moves until  each of $F_{24}$, $F_{34}$, $F_{24}'$ and $F_{34}'$ is a disk. Then each of $H_i$ and $H_i'$ is a handlebody attached to $H_4=H_4'$ at a disk for $i=2, 3$.
	\item[Step 3:] We show that each of 1-handles $h_i$ (resp. ${h_{i}}'$) of $H_i$ (resp. $H_i'$) is a local 1-handle attached to  $H_4\cup(H_i-h_i)$ (resp. $H_4'\cup(H_i'-h_i')$) for $i=2, 3$ so that dual disks of 1-handles are disjoint. Also we show that a 1-handle $h$ (resp. $h'$) whose cocore is $F_{23}$ (resp. $F_{23}'$) is a local 1-handle attached to $\partial (H_2\cup H_3\cup H_4)$ (resp. $\partial (H_2\cup H_3\cup H_4')$) so that a dual disk of $h$ (resp. $h'$) is a disjoint from dual disks of 1-handles of $H_i$ (resp. $H_i'$) for $i=2, 3$. This implies that $H_i=H_i'$ after performing handleslides for $i=2, 3$. 
	\item[Step 4:] In oder to perform handleslides $H_2$ on $H_4$, perform type 1 stabilization and XI, IX moves until $F_{14}$ is an annulus. After that, we perform type 0 stabilization until the genus of $H_2$ equals to that of $H_2'$. Then $H_2=H_2'$ after performing handleslides. 
	\item[Step 5:] In oder to perform handleslides $H_3$ on $H_2$, perform type 1 stabilizations and XI, IX moves until $F_{12}$ is a disk and $F_{14}$ is an empty set. After that, we perform type 0 stabilization until the genus of $H_3$ equals to that of $H_3'$.Then $H_3=H_3'$ after performing handleslides.
\end{enumerate}

\begin{proof}[Proof of Theorem \ref{stbthm}]
	Let $H_1\cup H_2\cup H_3\cup H_4$ and  $H_1'\cup H_2'\cup H_3'\cup H_4'$ be multibranched handlebody decompositions of the same 3-manifold $M$.
	Let $F_{ij}=H_i\cap H_j$ and $F_{ij}'=H_i'\cap H_j'$.
	After performing XI moves to each degree 4 branches, we can assume that all branched loci are tribranched.
	For each $\partial H_i$, we suppose $\partial H_i=F_{ij}\cup F_{ik}$ for $k\neq j$.
	Then we can assume that $H_1=F_{12}\cup F_{13}$.
 	Since  all the branched loci are tribranched, $\partial H_2=F_{12}\cup F_{23}$ and $\partial H_3=F_{13}\cup F_{23}$.
	This is a contradiction.
	Hence we can assume that $F_{1i}\neq\emptyset$ for $i=2, 3, 4$ without loss of generality as necessary after renaming indices.
	
	\underline{Step1}: We show the following claim to achieve step 1.
	\begin{claim}\label{cl1}
		We can assume $H_1\cup H_2\cup H_3$ is a handlebody after applying some XI and IX moves and type 1 stabilizations i.e. $(H_1\cup H_2\cup H_3)\cup H_4$ is a Heegaard splitting.
	\end{claim}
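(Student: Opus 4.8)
The plan is to reduce each of the faces $F_{12}$, $F_{13}$, $F_{23}$ to a disjoint union of disks using only the moves permitted in the claim, after which a direct gluing argument finishes the proof. Indeed, once $F_{12}$ consists of disks, $H_1\cup_{F_{12}}H_2$ is a boundary connected sum of handlebodies along the first disk followed by $1$-handle attachments along the remaining disks, hence a handlebody $K$. Since $F_{13}$ and $F_{23}$ are then disks in both $\partial K$ and $\partial H_3$, gluing $H_3$ to $K$ is again a boundary connected sum together with finitely many identifications of pairs of boundary disks, each of which is the attachment of a single $1$-handle. Therefore $H_1\cup H_2\cup H_3$ is a handlebody, and $(H_1\cup H_2\cup H_3)\cup H_4$ is a Heegaard splitting, which is exactly Claim \ref{cl1}.

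To reach that configuration I would first invoke Lemma \ref{lem1}: applying IX and XI moves, I remove every annulus component of $F_{12}$, $F_{13}$, $F_{23}$ whose two boundary circles lie on distinct triple loci, keeping the partition a tribranched surface. I would then drive down the complexity
\[
c=\sum_{\{i,j\}\subset\{1,2,3\}}\ \sum_{S}\bigl(2g(S)+b(S)-1\bigr),
\]
where the inner sum runs over the connected components $S$ of $F_{ij}$ and $g(S)$, $b(S)$ denote the genus and number of boundary circles of $S$; note that $c\ge 0$, with equality exactly when every component of $F_{12}$, $F_{13}$, $F_{23}$ is a disk. The engine for decreasing $c$ is the type $1$ stabilization: performed along a properly embedded arc $\alpha$ in a face $F_{jk}$ with endpoints on branched loci, it cuts that face along $\alpha$. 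Choosing $\alpha$ non-separating lowers the genus of the cut component, while choosing $\alpha$ to join two distinct boundary circles lowers its number of boundary components; in either case the $1$-handle $N(\alpha)$ contributes only disk pieces to the two neighbouring faces $F_{ij}$, $F_{ik}$, and one checks directly that the result remains a valid tribranched multibranched handlebody decomposition.

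The main obstacle is the coupling between faces. A type $1$ stabilization that simplifies $F_{jk}$ simultaneously modifies the adjacent faces $F_{ij}$ and $F_{ik}$ by attaching bands and changes the number of branched loci by one, so it is not automatic that $c$ decreases on the nose. The technical heart of the argument is therefore to order the operations so that $c$ strictly decreases at each stage and the process terminates with every face a disk, rather than cycling. I expect to control this by treating the faces in a fixed order and verifying that the disk pieces introduced on neighbouring faces never raise their genus and can be absorbed, either immediately or by a subsequent application of Lemma \ref{lem1} to the annuli they create; establishing this monotonicity and termination is where the real work lies.
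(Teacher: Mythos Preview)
Your overall plan---reduce each of $F_{12},F_{13},F_{23}$ to disks via type~1 stabilizations and Lemma~\ref{lem1}, then conclude by a gluing argument---matches the paper's strategy, and your complexity $c=\sum (1-\chi(S))$ is a reasonable bookkeeping device. What is missing is precisely the step you flag as ``where the real work lies,'' and the paper's proof shows that this step is not a routine monotonicity check.

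The key mechanism you have not identified is to route the stabilizations through $H_4$. When you perform a type~1 stabilization along an arc $\alpha\subset F_{jk}$ with endpoints on the tribranched locus $H_i\cap H_j\cap H_k$, the neighbouring faces $F_{ij},F_{ik}$ each acquire a \emph{band}, not a disjoint disk; a band attached with both ends on the same boundary circle can raise genus, so your assertion that the added pieces ``never raise their genus'' is false in general, and $c$ need not decrease. The paper avoids this by choosing, whenever possible, the endpoints of $\alpha$ on a locus of the form $H_j\cap H_k\cap H_4$. Then the bands land on $F_{j4}$ and $F_{k4}$, which do not appear in $c$, and $c$ drops cleanly. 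The residual difficulty is that a component $S$ of $F_{jk}$ may have \emph{no} boundary on such a locus; the paper handles this by a short case analysis: if $\partial S$ meets both $\partial F_{j4}$-type and $\partial F_{j\ell}$-type loci, one first uses stabilizations (through $H_4$) to reduce $S$ to an annulus straddling the two types and then kills it with Lemma~\ref{lem1}; if $\partial S$ meets only one type, $S$ is cut directly to a disk. Processing the three faces in order, one checks that arcs connecting two disk components of an already-simplified face keep it a union of disks, so earlier work is not undone. Your sketch needs exactly this ingredient to terminate.
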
 
		\begin{proof}
			We shall consider about $F_{12}$.
			Each component of $\partial F_{12}$ is a component of $\partial F_{13}$ or $\partial F_{14}$ also.
			Let $S_{12}$ be a component of $F_{12}$.
			Suppose that $\partial S_{12}\cap\partial F_{13}\neq\emptyset$ and $\partial S_{12}\cap\partial F_{14}\neq\emptyset$.
			Let $C$ be a component of $\partial S_{12}\cap\partial F_{13}$.
			Then we can take arcs properly embedded in $S_{12}$ which cuts open $S_{12}$ into a planar surface and their endpoints are in $C$.
			After performing type 1 stabilizations along such arcs, we can assume $S_{12}$ is a planar surface.
			Then we can take arcs properly embedded in $S_{12}$ which connects the components of $\partial S_{12}\cap\partial F_{13}$ (resp. $\partial S_{12}\cap\partial F_{14}$) and cut open $S_{12}$ into an annulus.
			After performing type 1 stabilizations along such arcs, we can assume $S_{12}$ is an annulus.
			Now one of the components of $\partial S_{12}$ is a component of $\partial F_{13}$ and the other is a component of $\partial F_{14}$.
			Then we can assume $S_{12}=\emptyset$ after performing XI  move and IX moves along $S_12$ by Lemma \ref{lem1}.%アニュラスを作る場合$\partial H_2=F_{12}\cupF_{23}\cup F_{24}$であることに注意。
			If $S_{12}$ satisfies that  $\partial S_{12}\cap\partial F_{13}=\emptyset$ or $\partial S_{12}\cap\partial F_{14}=\emptyset$, $S_{12}$ is a disk after performing type 1 stabilizations.
			After performing the above procedure for all components of $F_{12}$,  $F_{12}$ is an empty set or a union of disks.
			
			Suppose that $F_{12}$ is an empty set.
			We shall consider about $F_{13}$. 
			We can take arcs properly embedded in $F_{13}$ which cut open each component of $F_{13}$ into a disk.
			Since $\partial H_1=F_{13}\cup F_{14}$, the endpoints of such arcs are contained in $\partial F_{14}$.
			Hence we can perform type 1 stabilization along such arcs.
			After performing type 1 stabilization along such arcs, we can assume that $F_{13}$ is a union of disks.
			Then we shall consider about $F_{23}$.
			Since $\partial H_2=F_{23}\cup F_{24}$, we can take arcs properly embedded in $F_{23}$ so that such arcs cut open $F_{23}$ into a disks.
			Hence $F_{23}$ can be deformed into an empty set or a union of disks without changing $F_{12}$ and $F_{13}$ in the same way as before.
			Since $F_{12}=\emptyset$, $F_{13}$ is a union of disks, $F_{23}$ is an empty set or a union of disks, $H_1\cup H_2\cup H_3$ is a handlebody.
			
			Next, we suppose that $F_{12}$ consists of disks.
			Let $S_{13}$ be a component of $F_{13}$.
			Suppose $\partial S_{13}\cap\partial F_{14}=\emptyset$.
			Then we can assume that each of the components of $\partial S_{13}$ is also a component of $\partial F_{12}$.
			Since $F_{12}$ is a union of disks, $\partial H_1=S_{13}\cup F_{12}$.
			This contradicts that $F_{14}\neq\emptyset$.
			Hence we can assume that $\partial S_{13}\cap\partial F_{14}\neq\emptyset$.
			Suppose that $\partial S_{13}\cap\partial F_{12}\neq\emptyset$ and $\partial S_{13}\cap\partial F_{14}\neq\emptyset$.
			Let $C'$ be a component of $\partial S_{13}\cap\partial F_{14}$.
			Then we can take arcs properly embedded in $S_{13}$ which cuts open $S_{13}$ into a planar surface and satisfies their endpoints are in $C'$.
			After performing type 1 stabilizations along such arcs, we can assume $S_{13}$ is a planar surface without changing $F_{12}$.
			Then we can take arcs properly embedded in $S_{13}$ which connects the components of $\partial S_{13}\cap\partial F_{12}$ (resp. $\partial S_{13}\cap\partial F_{14}$) each other and cut open $S_{13}$ into an annulus with keeping $F_{12}$ as a union of disks.
			After perfroming type 1 stabilizations along such arcs, we can assume $S_{13}$ is an annulus.
			Then we can deform $S_{13}$ into an empty set by performing XI move and IX moves along $S_{13}$ without changing $F_{12}$ by Lemma \ref{lem1}.
			Suppose that $\partial S_{13}\cap\partial F_{12}=\emptyset$ and $\partial S_{13}\cap\partial F_{14}\neq\emptyset$.
			Then we can take arcs properly embedded in $S_{13}$ which cuts open $S_{13}$ into a disk so that endpoints  of such arcs are contained in $\partial F_{14}$.
			After performing  type 1 stabilizations along such arcs, $S_{13}$ becomes a disk.
			Hence $F_{13}$ is an empty set or a union of disks.
			Then we shall consider $F_{23}$.
			We note that $F_{24}$ or $F_{34}$ is not an empty set.
			Hence we can assume that $F_{24}\neq\emptyset$.
			Let $S_{23}$ be a component of $F_{23}$.
			If $S_{23}\cap F_{24}=\emptyset$, $\partial H_2=S_{23}\cup F_{12}$ since $F_{12}$ is a union of disks.
			In particular, $F_{24}=\emptyset$. This is a contradiction.
			Hence $S_{23}\cap F_{24}\neq\emptyset$.
			Then we can take arcs properly embedded in $S_{23}$ which cuts open $S_{23}$ into an annulus or disk so that endpoints  of such arcs are contained in $\partial F_{24}$.
			We can deform $F_{23}$ into an empty set or a union of disks by performing type 1 stabilizations along such arcs and XI  move and IX moves.
			Now, $F_{ij}$ is an empty set or a union of disks for $\{i, j\}\subset \{1, 2, 3\}$.
			After performing type 1 stabilization, we can assume $F_{ij}$ is an empty set or exactly one disk for $\{i, j\}\subset \{1, 2, 3\}$.
			Hence we can assume that $H_1\cup H_2\cup H_3$ is a handlebody.
		\end{proof}
			\begin{figure}[httb]
					\centering
					\includegraphics[scale=0.5]{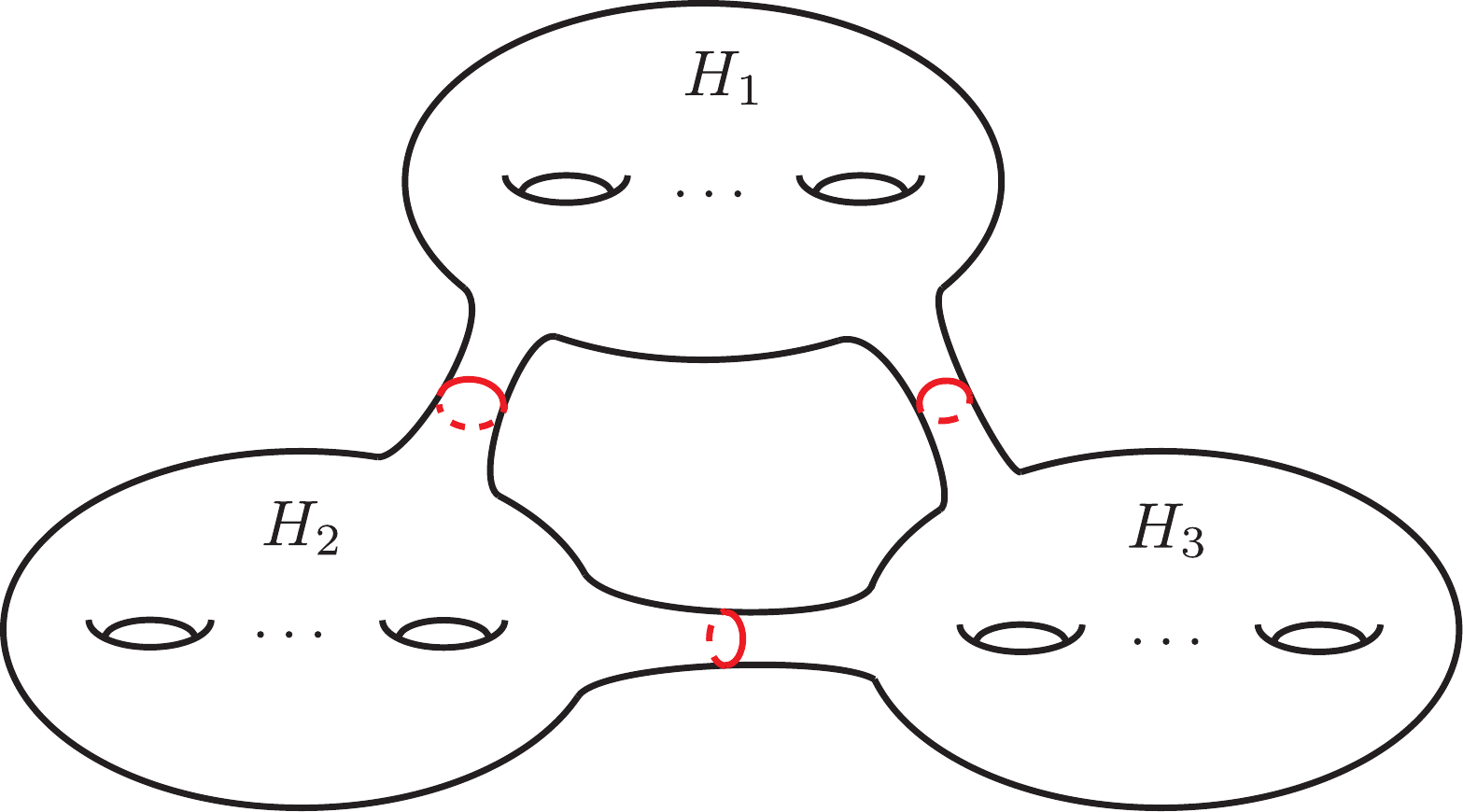}
					\caption
					{$H_1\cup H_2\cup H_3$ is a handlebody.}
					\label{Fig_2}
			\end{figure}
	Similarly, we can assume $H_1'\cup H_2'\cup H_3'$ is a handlebody.
	By Claim \ref{cl1} and a stable equivalence of Heegaard splittings, we can assume $H_4=H_4'$ after performing type 0 stabilizations finitely many times.
	From now, we assume that each of $F_{12}$, $F_{13}$ and $F_{23}$ is a disk.
	
	\underline{Step 2}: We show the following claim to achieve step 2.
	\begin{claim}\label{cl2}
		We can deform $F_{24}$ and $F_{34}$ into disks by type 1 stabilizations and XI  and IX moves.
	\end{claim}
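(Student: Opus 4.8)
The plan is to treat $F_{24}$ exactly as the proof of Claim \ref{cl1} treats a component of $F_{12}$, and then to repeat the argument verbatim for $F_{34}$. The relevant structure is that $\partial H_2=F_{12}\cup F_{23}\cup F_{24}$ with $F_{12}$ and $F_{23}$ already disks, so every boundary circle of a component $S_{24}$ of $F_{24}$ lies on one of the two branch loci $H_1\cap H_2\cap H_4$ and $H_2\cap H_3\cap H_4$. Thus $S_{24}$ is in the same position relative to $F_{14}$ and $F_{34}$ that the surface $S_{12}$ of Claim \ref{cl1} occupied relative to $F_{13}$ and $F_{14}$, and the same two-step surgery is available.

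Concretely, for each component $S_{24}$ I would first run type 1 stabilizations along properly embedded arcs with both endpoints on a single boundary circle to cut $S_{24}$ open into a planar surface, and then along arcs joining boundary circles of the same branch-locus type to cut it down to an annulus or a disk. If the reduced piece is an annulus with one boundary on $H_1\cap H_2\cap H_4$ and the other on $H_2\cap H_3\cap H_4$, I would delete it by Lemma \ref{lem1} with $(i,j,l,k)=(2,4,1,3)$, i.e.\ by one IX move followed by one XI move, which keeps the partition tribranched; if both boundary circles lie on the same branch locus the surgeries already terminate with a disk. After processing every component and amalgamating the resulting disks by a final type 1 stabilization, $F_{24}$ becomes a single disk, and the identical procedure applied to $\partial H_3=F_{13}\cup F_{23}\cup F_{34}$ makes $F_{34}$ a disk.

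The step I expect to be the main obstacle is the bookkeeping of where each 1-handle is attached and of the effect on the other walls. A type 1 stabilization cutting $F_{24}$ along an arc with endpoints on $H_1\cap H_2\cap H_4$ (resp.\ $H_2\cap H_3\cap H_4$) hands its 1-handle to $H_1$ (resp.\ $H_3$) and only removes a boundary collar from $H_4$; since such a denting is an isotopy of $H_4$, the identification $H_4=H_4'$ obtained in Step 1 survives all of these operations, which is precisely what Step 2 needs. The price is that these handles enlarge $F_{12}$ or $F_{23}$, so the genus of $H_2$ (resp.\ $H_3$) is transported into the wall $F_{12}$ (resp.\ $F_{13}$) rather than removed; one should therefore not expect $F_{12},F_{13},F_{23}$ to remain disks after Step 2, and indeed they are re-simplified in Steps 4 and 5. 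To make the argument rigorous I would assign to the configuration the complexity $\sum(\text{genus}+\#\text{boundary components})$ taken over the components of $F_{24}$ and $F_{34}$, check that every type 1 stabilization strictly decreases it and that every application of Lemma \ref{lem1} removes a component, and verify that the arcs used for $F_{24}$ can be kept disjoint from $F_{34}$ so that the two reductions do not interfere.
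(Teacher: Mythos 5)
There is a genuine gap, and it is not where you expect it. You locate the ``main obstacle'' in keeping the arcs used for $F_{24}$ disjoint from $F_{34}$, but disjointness is a red herring: the interference travels through the branch loci via the very definition of a type 1 stabilization. If $\alpha\subset F_{24}$ has both endpoints on the locus $H_2\cap H_3\cap H_4$, then the 1-handle $N(\alpha)$ is attached to $H_3$, and its lateral boundary consists of one band glued to $F_{23}$ and one band glued to $F_{34}$ along that locus. So every such stabilization, while cutting $F_{24}$, attaches a band to $F_{34}$ and strictly complicates it --- regardless of where the arc sits. Your proposed complexity $\sum(\text{genus}+\#\text{boundary})$ therefore does not decrease (it also fails for a different, minor reason: a cut that reduces genus by one raises the number of boundary circles by one, so you should use Euler characteristic). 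Worse, ``repeating the argument verbatim for $F_{34}$'' uses arcs with endpoints on $H_2\cap H_3\cap H_4$, whose handles go to $H_2$ and add bands to $F_{24}$: the disk you have just produced for $F_{24}$ is destroyed, and the procedure as written can cycle.

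The paper's proof is engineered precisely to avoid this feedback, by routing every 1-handle of Step 2 into $H_1$ (so only $F_{12}$, $F_{13}$, $F_{14}$ are allowed to get worse --- your observation that these cannot be kept as disks is correct). Concretely: cut $F_{24}$ only along arcs with endpoints on $\partial F_{12}$; such arcs cannot remove the boundary circles of $F_{24}$ lying on $H_2\cap H_3\cap H_4$, so the result is not a disk but a disk plus an annulus with one boundary circle on each type of locus; that annulus is then eliminated by Lemma \ref{lem1}, which transfers it to $F_{13}$ and never touches $F_{34}$ adversely. After this, because $F_{23}$ and $F_{24}$ are single disks, every boundary circle of $F_{34}$ is a component of $\partial F_{13}$, so $F_{34}$ can be cut to a disk using only arcs with endpoints on $\partial F_{13}$ --- again handles to $H_1$, with no band ever fed back into $F_{24}$. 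Your outline becomes correct if you impose exactly this discipline (arcs on $F_{24}$ ending only on $\partial F_{12}$, then Lemma \ref{lem1}, then arcs on $F_{34}$ ending only on $\partial F_{13}$), which is the paper's argument; also note that when amalgamating disk components you must avoid arcs whose stabilization hands a handle to $H_4$, since Step 2 needs the identification $H_4=H_4'$ to survive, as you correctly point out.
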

		\begin{proof}
			Now, each of $F_{12}$, $F_{13}$ and $F_{23}$ is a disk.
			Then we can take arcs properly embedded in $F_{24}$ which cut open $F_{24}$ into a disk and annulus and satisfies their endpoints are contained in $\partial F_{12}$.
			After applying type 1 stabilizations along such arcs, we can assume $F_{24}$ is a union of a disk and an annulus.
			One of the boundaries of the annulus component of $F_{24}$ is the component of $F_{12}$ and the other is the component of $F_{34}$.
			By Lemma \ref{lem1}, after applying XI  and IX moves along the annulus, we can assume $F_{24}$ is a disk and $F_{13}$ has the new annulus component.
			
			Now,  $F_{13}$ is the union of a disk and an annulus.
			We note that each of the components of $\partial F_{34}$ is a component of $\partial F_{13}$.
			Then we can take arcs properly embedded in $F_{34}$ which cut open $F_{34}$ into a disk.
			After applying type 1 stabilizations along such arcs, we can assume that $F_{34}$ is a disk. 
			(See Figure \ref{Fig_2})
			\begin{figure}[httb]
					\centering
					\includegraphics[scale=0.5]{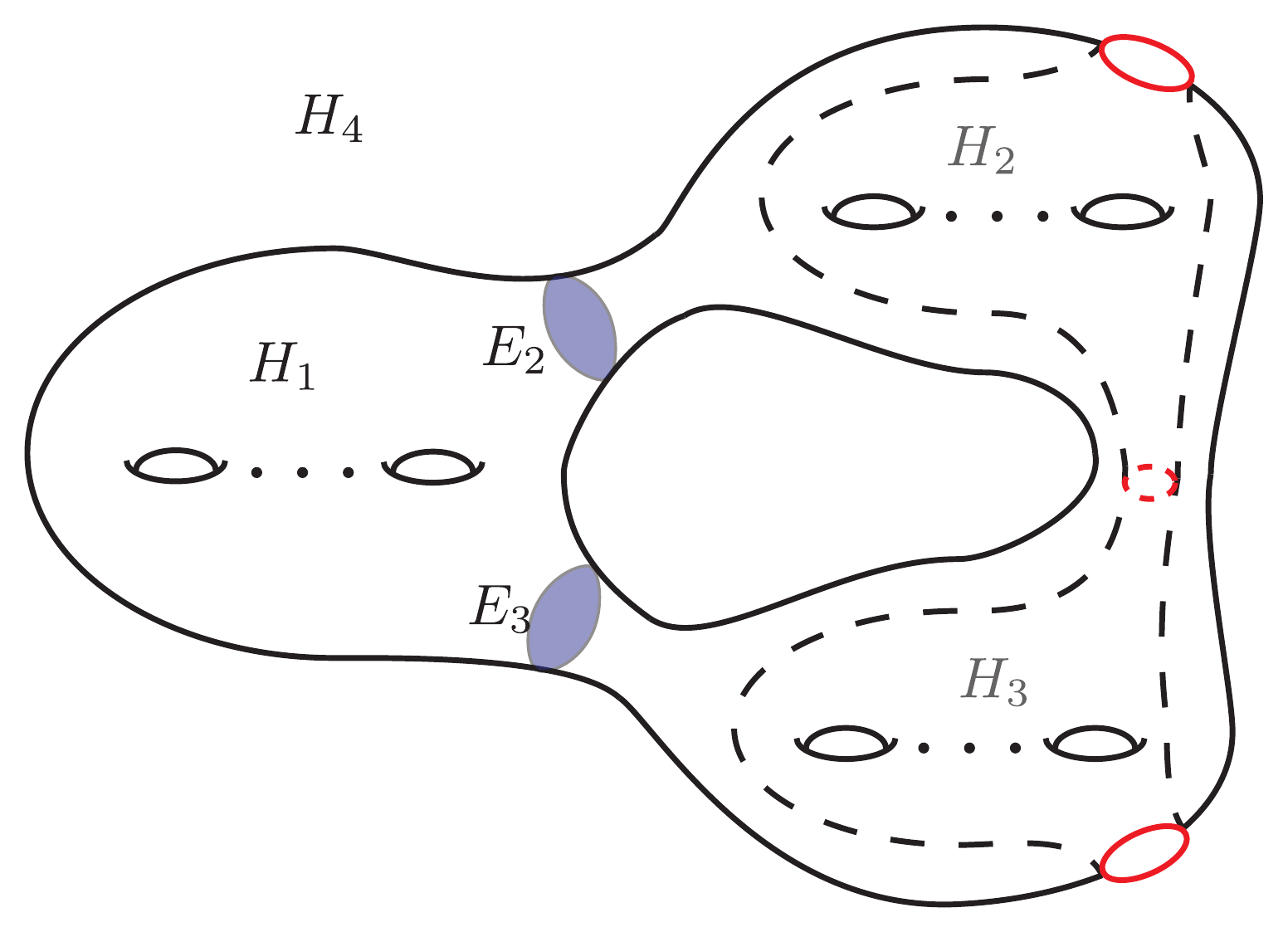}
					\caption
					{The situation of handlebody decomposition after Claim \ref{cl2}. The red curves in this figure are branched loci.}
					\label{Fig_2}
			\end{figure}
		\end{proof}
			
		\underline{Step 3}: We show the following two claims to achieve step 3.
		\begin{claim}\label{cl3}
		Let $D_{i1}, . . . , D_{ig_i}$ be a  complete meridian disks system of $H_i$ so that $\partial D_{ij}\subset F_{1i}$ for $i\in\{2, 3\}$ and $j\in\{1, . . . , g_i\}$ and $D$ the union $\cup_{i, j} D_{ij}$ for all $D_{ij}$.
		Then there exist disjoint meridian disks $E_{ij}$ $( i\in\{2, 3\}, j \in\{1, . . . , g_i\} )$ of $H_1$ such that $\partial E_{ij}\subset F_{14}\cup F_{1i}$ and $E_{ij}\cap D=E_{ij}\cap D_{ij}$ is one point.
	\end{claim}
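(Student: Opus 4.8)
The plan is to reduce the claim to a purely $H_1$-side statement about a dual meridian system and then to build that dual system, first for a convenient model and then for an arbitrary one. Write $c_{ij}=\partial D_{ij}\subset F_{1i}$. Since $E_{ij}\subset H_1$ and $D_{i'j'}\subset H_{i'}$ meet only along $F_{1i'}=H_1\cap H_{i'}$, and $E_{ij}$ touches $F_{1i'}$ only in its boundary, imposing $\partial E_{ij}\subset F_{14}\cup F_{1i}$ already forces $E_{ij}\cap D_{i'j'}=\emptyset$ whenever $i'\neq i$, and makes $E_{ij}\cap D_{ij'}$ equal to $\partial E_{ij}\cap c_{ij'}$. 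So the claim is equivalent to producing disjoint meridian disks $E_{ij}$ of $H_1$ with $\partial E_{ij}\subset F_{14}\cup F_{1i}$ whose boundaries, read inside the surface $F_{1i}$, meet $c_{ij}$ transversally in one point and miss every $c_{ij'}$ with $j'\neq j$. In particular the genuine difficulty is the boundary confinement together with mutual disjointness; the intersection pattern with the other handlebody's disks is automatic.

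I would first prove this for the model meridian system produced by the Step~2 construction. Because $\{D_{i1},\dots,D_{ig_i}\}$ is complete, cutting $H_i$ along it yields a ball and each $c_{ij}$ is an essential, non-separating curve on $F_{1i}$; moreover the handle of $H_i$ dual to $D_{ij}$ is attached to $H_1$ across the single gluing disk $F_{1i}$ arranged in Steps~1--2. For the system arising directly from the type~1 stabilizations of Step~2, the sought disk $E_{ij}$ is the cocore of the $1$-handle added to $H_1$ at that stabilization: these cocores are disjoint, each meets only its own $D_{ij}$ in one point, and their boundaries can be pushed to lie in $F_{14}\cup F_{1i}$. This settles one meridian system.

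To reach an arbitrary complete meridian system $\{D_{ij}\}$, I would invoke the classical fact that any two complete meridian systems of a handlebody are connected by a finite sequence of handle slides and isotopies, and transport the dual system along it: a slide of $D_{ij}$ over $D_{ij'}$ is matched by the reverse slide of $E_{ij'}$ over $E_{ij}$, the standard dual-basis move, which preserves disjointness of the $E$'s, the condition $E_{ij}\cap D=E_{ij}\cap D_{ij}$, and the boundary confinement. Tracking $\{E_{ij}\}$ through the sequence yields the disks for the given $\{D_{ij}\}$.

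The step I expect to be the main obstacle is keeping all three properties alive simultaneously under these slides, since a careless slide could drag $\partial E_{ij}$ across a branch locus into some $F_{1i'}$ ($i'\neq i$) or create an extra crossing with another $c_{i'j'}$. I would control this by performing every slide inside a product neighborhood of $F_{14}\cup F_{1i}$ and disjoint from the other gluing disks $F_{12},F_{13},F_{23}$, which is possible precisely because those are single disks after Steps~1--2. If transporting along slides proves awkward, a self-contained alternative is to fix any meridian system $\{E_k\}$ of $H_1$ with boundaries in $F_{14}$, form the intersection matrix $\langle \partial E_k, c_{ij}\rangle$ on $\partial H_1$, show it is unimodular from completeness of $\{D_{ij}\}$, and then realize the resulting dual basis geometrically by innermost-disk and outermost-arc surgeries confined to $F_{14}\cup F_{1i}$.
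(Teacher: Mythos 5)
Your reduction in the first paragraph is fine, but the construction that follows rests on a misreading of the configuration reached after Steps 1--2, and this destroys the base case of your transport argument. After Step 2 the surfaces that are single disks are $F_{23}$, $F_{24}$, $F_{34}$ --- not $F_{12}$, $F_{13}$, $F_{23}$. The type 1 stabilizations of Step 2 run along arcs in $F_{24}$ and $F_{34}$ with endpoints on branch loci, so each one attaches a 1-handle to $H_1$ and adds the sides of that tube to $F_{12}$ (or $F_{13}$) and $F_{14}$; hence at the stage of this claim $F_{12}$ and $F_{13}$ are the large subsurfaces of $\partial H_2$, $\partial H_3$ (genus $g_2$, resp.\ $g_3$, each with two boundary circles). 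Indeed they must be, since the hypothesis $\partial D_{ij}\subset F_{1i}$ requires $F_{1i}$ to carry the boundary curves of a complete meridian system, which a disk cannot. More seriously, those Step 2 stabilizations do not touch $H_2$ or $H_3$ at all: only the genus of $H_1$ changes. So ``the meridian system arising directly from the type 1 stabilizations of Step 2'' is not a meridian system of $H_2$ and $H_3$; the cocores you propose are meridian disks of $H_1$ dual to $H_1$'s \emph{own} 1-handles, and their boundaries have no reason to meet any $\partial D_{ij}$ at all, let alone exactly once. Consequently you never produce the disks $E_{ij}$ for even one complete system, and the handle-slide transport (whose control is also argued from the false premise that $F_{12},F_{13}$ are disks --- note that for $i=2$ your product neighborhood of $F_{14}\cup F_{12}$ is supposed to be disjoint from $F_{12}$) has nothing to start from. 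The fallback via a unimodular intersection matrix has the same defect: homological duality does not by itself yield embedded, pairwise disjoint disks with geometric one-point intersections and confined boundary, which is exactly the content to be proved.

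The idea you are missing, which is the whole of the paper's proof, is a parallelism statement inside $H_1$: there exist two disjoint disks $E_2,E_3$ properly embedded in $H_1$ cutting off a handlebody $W$ with $(W,F_{12}\cup F_{13})\cong\bigl((F_{12}\cup F_{13})\times[0,1],\,(F_{12}\cup F_{13})\times\{0\}\bigr)$; in other words, $F_{12}\cup F_{13}$ is parallel across $H_1$ to a subsurface of $F_{14}$, away from the two disks. Granting this, an \emph{arbitrary} given system is handled directly and uniformly: choose disjoint non-separating arcs $\alpha_{ij}$ properly embedded in $F_{1i}$, dual to the curves $\partial D_{ij}$ (meeting $\partial D_{ij}$ once and missing the other curves), with endpoints on the branch circle of $\partial F_{1i}$ adjacent to $F_{i4}$, and let $E_{ij}$ be the vertical disk $\alpha_{ij}\times[0,1]$ in $W$, chosen to miss $E_2\cup E_3$. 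Its bottom lies in $F_{1i}$ and its top and sides land in $F_{14}$, so $\partial E_{ij}\subset F_{1i}\cup F_{14}$, and since $E_{ij}\cap F_{1i}=\alpha_{ij}$, all intersection conditions with $D$ hold at once. No transport between systems is needed; conversely, any attempt to realize your ``dual slides'' geometrically would require this same product structure, so the parallelism cannot be bypassed.
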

	\begin{proof}
		There exist  mutually disjoint disks $E_2$ and $E_3$ in $H_1$ such that  $E_2\cup E_3$ cuts off a handlebody $W$
		from $H_1$ so that $(W, F_{12}\cup F_{13})$ is homeomorphic to $((F_{12}\cup F_{13})\times [0, 1], (F_{12}\cup F_{13})\times \{0\})$ (See Figure \ref{Fig_2}).
		%Since each of $F_{24}$, $F_{23}$ and $F_{34}$ is a disk, we can assume that $\partial D_{ij}$ is contained in $F_{1i}$ for $i=2, 3$.
		%Since each of $\partial D_{ij}$ is a boundary of meridian disk of $H_i$, there are mutually disjoint non-separating simple closed curves on $\partial H_i$ such that each of them intersects exactly one of $\partial D_{ij}$ exactly once for all $j$.
		%Since  $F_{i4}$ is a disk for $i=2, 3$,  there exist non-separating disjoint arcs properly embedded in $F_{1i}$ which intersect exactly one of $D_{ij}$ and endpoints of them are contained in $F_{i4}$.
		Then we can take mutually disjoint non-separating arcs $\alpha_{i1}, . . . , \alpha_{ig_i}$ properly embedded in $F_{1i}$ so that 
		$\alpha_{ij}\cap D=\alpha_{ij}\cap D_{ij}$ is exactly one point and $\partial \alpha_j\subset F_{i4}$ for $i=2, 3$. Let $E_{ij}$ be a disk corresponding to $\alpha_{ij}\times [0, 1]$ so that $E_{ij}\cap E_i=\emptyset $ for each $i\in\{2, 3\}$. 
		Then the statement holds since $\partial W-(E_1\cup E_2)\subset F_{12}\cup F_{13}\cup F_{14}$.
		We note that $E_{ij}$ is a meridian disk of $H_i$ since each of $\alpha_{ij}$ is a non-separating arc. 
	\end{proof}
	\begin{claim}\label{cl4}
		Let $D_{i1}, . . . , D_{ig_i}$ be a  complete meridian disks system of $H_i$ so that $\partial D_{ij}\subset F_{1i}$ for $i\in\{2, 3\}$ and $j\in\{1, . . . , g_i\}$ and $D$ the union $\cup_{i, j} D_{ij}$ for all $D_{ij}$. 
		Then there is a meridian disk $D'$ of $H_1$ which satisfies the following. 
		\begin{enumerate}
			\item $D'\cap D=\emptyset$.
			\item $\partial D\subset F_{12} \cup F_{13}\cup F_{14}$.
			\item $D\cap F_{34}$ is exactly one point.
			\item $D'$ does not intersects $E_{ij}$ constructed in Claim \ref{cl3}
		\end{enumerate}
	\end{claim}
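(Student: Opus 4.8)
The plan is to build $D'$ as a vertical disk in the product region supplied by Claim \ref{cl3}, using the annulus component that Claim \ref{cl2} created in $F_{13}$ to force the single intersection with $F_{34}$.

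First I would fix the data already produced. By Claim \ref{cl3} there is a product region $W\subset H_1$ with $(W,F_{12}\cup F_{13})\cong((F_{12}\cup F_{13})\times[0,1],(F_{12}\cup F_{13})\times\{0\})$, cut off from $H_1$ by the disks $E_2,E_3$, and the disks $E_{ij}=\alpha_{ij}\times[0,1]$ are vertical over properly embedded arcs $\alpha_{ij}\subset F_{1i}$. By Claim \ref{cl2}, $F_{13}$ contains an annulus component $A$ one of whose boundary curves is the branched locus $\partial F_{34}$ (the other being the remaining branch curve on $\partial F_{13}$). I would also record the key incompressibility observation: since $F_{23}$ and $F_{34}$ are disks in $\partial H_3$, both boundary curves of $A$ bound disks in $\partial H_3$, so no essential meridian curve of $H_3$ is isotopic to the core of $A$.

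Next I would choose a spanning arc $\beta$ of $A$ meeting the branched locus $\partial F_{34}$ exactly once at its endpoint. Using the observation above, the curves $\partial D_{3j}\subset F_{13}$ meet $A$ only in inessential curves and can be isotoped off a neighbourhood of $\beta$, and $\beta$ can be taken disjoint from every arc $\alpha_{ij}$ and from every meridian boundary $\partial D_{ij}$; all of these are finitely many disjoint $1$-complexes in $F_{12}\cup F_{13}$, so a generic spanning arc avoids them. I then set $D':=\beta\times[0,1]\subset W\subset H_1$. As in Claim \ref{cl3}, a vertical disk over an essential non-separating arc is a meridian disk of $H_1$, and since $D'\subset H_1$ its boundary automatically lies in $\partial H_1=F_{12}\cup F_{13}\cup F_{14}$, giving condition (2). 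Condition (3) is immediate: $\partial D'$ crosses $\partial F_{34}$ precisely at the single endpoint of $\beta$ on that branched locus, so $D'\cap F_{34}$ is one point. Conditions (1) and (4) follow from the product structure, since $\beta$ disjoint from each $\alpha_{ij}$ forces $D'=\beta\times[0,1]$ disjoint from $E_{ij}=\alpha_{ij}\times[0,1]$, while $\beta$ disjoint from each $\partial D_{ij}$ together with $D'\subset H_1$ and $D_{ij}\subset H_2\cup H_3$ gives $D'\cap D=\emptyset$ after a small isotopy along the gluing disks.

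The step I expect to be the main obstacle is condition (1) together with (3): one must guarantee that a spanning arc of $A$ can be chosen to cross $\partial F_{34}$ exactly once while remaining disjoint from the prescribed meridian curves $\partial D_{3j}$ lying in $F_{13}$. This is exactly where the annulus manufactured in Claim \ref{cl2} is indispensable, because its two boundary curves bound the disks $F_{23}$ and $F_{34}$ in $\partial H_3$, so no meridian of $H_3$ can run parallel to the core of $A$ and the meridian curves meet $A$ only in inessential curves that a spanning arc can avoid. Verifying this disjointness rigorously, and checking that the resulting vertical disk is genuinely essential rather than boundary-parallel in $H_1$, is the technical heart of the argument.
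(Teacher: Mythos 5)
Your overall strategy (a vertical disk in the product region $W$ over an arc chosen by general position to miss the $\alpha_{ij}$ and the $\partial D_{ij}$) is the same as the paper's, but the arc you choose is the wrong one, and this breaks the claim in two ways. First, the functional content of the claim: $D'$ is used in Steps 3 and 5 as the dual disk of the $1$-handle connecting $H_2$ and $H_3$, whose cocore is the disk $F_{23}$; by the paper's definition of a local $1$-handle, $\partial D'$ must cross the boundary of that cocore, i.e. the branched locus $\partial F_{23}$, transversally in exactly one point. This is exactly what the paper's proof arranges: its arc $\beta$ is properly embedded in $F_{12}\cup F_{13}$, has one endpoint on $\partial F_{24}$ and the other on $\partial F_{34}$, and satisfies ``$\beta\cap F_{23}$ is one point'', i.e. it runs from the locus adjacent to $F_{24}$ through $F_{12}$, crosses the locus where $F_{12}$, $F_{13}$, $F_{23}$ meet exactly once, and continues through the annulus of $F_{13}$ to the locus adjacent to $F_{34}$. (Item (3) of the statement is garbled --- note also the stray $D$'s in (2) and (3) --- and the intersection that actually matters is the one with $F_{23}$.) Your arc stays inside the annulus $A\subset F_{13}$, never enters $F_{12}$, and never crosses $\partial F_{23}$ transversally; a disk built from it cannot serve as the dual disk of the handle with cocore $F_{23}$, so it cannot play the role the claim exists for.

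Second, your disk is not even properly embedded in $H_1$ with boundary in $F_{12}\cup F_{13}\cup F_{14}$. The boundary circle of $A$ other than the one on $\partial F_{34}$ is precisely the branched locus where $F_{12}$, $F_{13}$ and $F_{23}$ meet; on $\partial H_1$ this circle is an \emph{interior} curve of the subsurface $F_{12}\cup F_{13}$ (along it $F_{12}$ and $A$ are glued to each other), not a curve of $\partial(F_{12}\cup F_{13})$ adjacent to $F_{14}$. Hence your spanning arc is not properly embedded in $F_{12}\cup F_{13}$, and over its endpoint on that locus the vertical side of $\beta\times[0,1]$, as well as the top $\beta\times\{1\}$, lies in the interior of $H_1$ or on the cutting disks $E_2\cup E_3$, not on $\partial H_1$; your assertion that ``since $D'\subset H_1$ its boundary automatically lies in $\partial H_1$'' is a non sequitur. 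This is exactly why the paper requires both endpoints of $\beta$ to lie on $\partial F_{24}$ and $\partial F_{34}$, both of which are adjacent to $F_{14}$: only then does the vertical disk close up along $F_{14}$, giving condition (2). The repair is the paper's construction: extend your arc across $\partial F_{23}$, through $F_{12}$, to an endpoint on $\partial F_{24}$, keeping it disjoint from $D$ and from the $\alpha_{ij}$ (possible because the $\alpha_{ij}$ are disjoint from $F_{23}$); the vertical disk over this longer arc satisfies all four conditions and is the paper's $D'$.
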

	\begin{proof}
		Let $W$ be the handlebody in a proof of Claim \ref{cl3} and each of $E_{ij}$ is a disk obtained by Claim \ref{cl3}.
		%There exist mutually disjoint disks $E_2$ and $E_3$ in $H_1$ such that  $E_2\cup E_3$ cuts off a handlebody $W$
		%from $H_1$ so that $(W, F_{12}\cup F_{13})$ is homeomorphic to $((F_{12}\cup F_{13})\times [0, 1], (F_{12}\cup F_{13})\times \{0\})$.
		Since $E_{ij}\cap H_2$ and $E_{ij}\cap H_3$ does not intersects $F_{23}$, there is a non-separating arc properly embedded in $F_{12}\cup F_{13}$ which does not intersects all $E_{ij}$ in Claim \ref{cl3}.
		Hence we can take a non-separating arc $\beta$ properly embedded in $ F_{12} \cup F_{13}$ so that $\beta\cap D=\emptyset$, $\beta\cap F_{23}$ is one point and one of the endpoints of $\partial \beta$ is in $F_{24}$ and the other is in $F_{34}$.
		Let $D'$ be a disk corresponding to $\beta\times [0, 1]$ so that $D'\cap E_i$ for each $i=2, 3$.
		Then the statement holds.
	\end{proof}
	
	We call a regular neighborhood of each of $D_{ij}$ in $H_i$ a 1-handle of $H_i$ for $i=2, 3$.
	We call a regular neighborhood of $F_{23}$ in $H_2\cup H_3$ a 1-handle connecting  $H_2$ and $H_3$.
	Claim \ref{cl3}, \ref{cl4} implies that any 1-handle of  $H_2$ and $H_3$ is a local 1-handle for $H_4$.
	The disks $E_{ij}$'s and $D'$ in Claim \ref{cl3}, \ref{cl4} are dual disks for 1-handles of $H_i$ for $i=2, 3$.
	Similarly, we can also take such dual disks for the 1-handles of $H_2'$ and $H_3'$ and a 1-handle connecting $H_2'$ and $H_3'$.
	Let $S_1$ be the surface $F_{14}$ at this stage.
	
	\underline{Step 4}: We shall show $H_2=H_2'$ in this step.
	Since $F_{24}$, $F_{34}$, $F_{24}'$ and $F_{34}'$ are disks, we can assume that $F_{24}=F_{24}'$ and $F_{34}=F_{34}'$.
 	We can take arcs properly embedded in $S_1$ so that the arcs cut open $S_1$ into an annulus and their endpoints lie in $\partial F_{24}=\partial F_{24}'$.
	We perform type 1 stabilizations for $H_2$ and $H_2'$ along such arcs. 
	Then $F_{14}=S_1-F_{12}(=S_1-F_{12}')$ becomes an annulus $A$ such that one of the boundaries of $A$ is a component of $\partial F_{13}$ (resp. $F_{13}$') and the other is a component of $\partial F_{23}$ (resp. $F_{14}$'). See Figure \ref{Fig_6}.
	
	According to the stabilizations of $H_2$ (resp. $H_2'$), there exists a separating disk $D_2$ and $D_2'$ in $H_2$ and $H_2'$ respectively which cut off  handlebodies $V_2$ and $V_2'$ from $H_2$ and $H_2'$ respectively so that $(V_2, S_1-A)$ and $(V_2', S_1-A)$ are homeomorphic to $((S_1-A)\times [0, 1], (S_1-A)\times \{0\})$.
	Since $H_4=H_4'$, $V_2=V_2'$. See Figure \ref{Fig_6}.
	
	We note that dual disks of 1-handles of $H_2-V_2$ and $H_2'-V_2'$ induce dual disks of $H_2-V_2$ and $H_2'- V_2'$ in $H_1$ for $V_2=V_2'$ respectively.
	Hence, 1-handles of $H_2-V_2$ and $H_2'-V_2'$ are local for $V_2=V_2'$. 
	If necessary, we perform type 0 stabilizations of $H_2$ or $H_2'$ until genus of $H_2$ and $H_2'$ are the same. 
	The 1-handles of $H_2$ and $H_2'$ which are obtained by type 0 stabilizations are local for $V_2$ and $V_2'$ respectively by a definition of a type 0 stabilization.
	After that we perform handle sliding 1-handles of $H_2-V_2$ on $V_2=V_2'$ until $H_2-V_2=H_2'-V_2'$.
	Since $V_2=V_2'$, $H_2=H_2'$.
	\begin{figure}[h]
					\centering
					\includegraphics[scale=0.5]{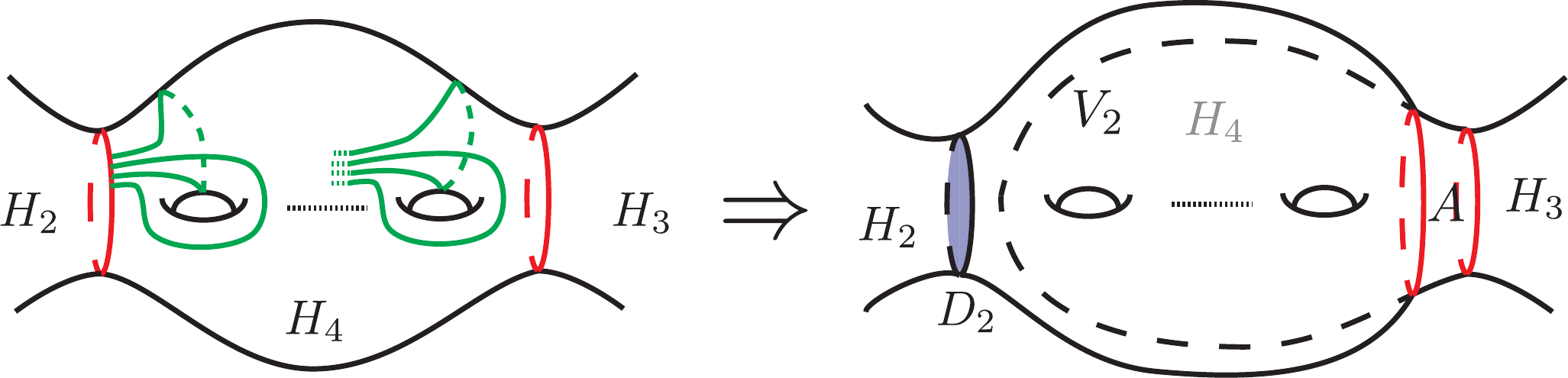}
					\caption
					{We perform type 1 stabilizations along green arcs in this left figure. After that $H_2$ is divided two handlebodies by $D_2$ in the right figure.}
					\label{Fig_6}
			\end{figure}
	
	Let $S_2$ be the surface $F_{12}$ at this stage.
	
	\underline{Step 5}: We shall show $H_3=H_3'$ in this step.
	After performing XI  and IX move along $A$, we can eliminate $A$ from $F_{14}$ by Lemma \ref{lem1}.
	After that, $F_{14}$ becomes an emptyset.
	Then we can take arcs properly embedded in $F_{12}$ (resp. $F_{12}'$) which cut open $F_{12}$ (resp. $F_{12}'$) into a disk $D^2$ and their endpoints lie in $\partial H_3$ (resp. $\partial H_3'$).
	%We perform type 1 stabilization along such arcs. 
	%Then $F_{24}$ becomes an annulus $A'$.
	%we can take an essential arc properly embedded in $A'$.
	We can perform type 1 stabilizations along such arcs.
	
	According to the stabilizations of $H_3$ and $H_3'$, there exists a separating disk $D_3$ and $D_3'$ in $H_3$ and $H_3'$ respectively which cut off handlebodies $V_3$ and $V_3'$ from $H_3$ and $H_3'$ respectively so that $(V_3, S_2-D^2)$ and $(V_3', S_2-D^2)$ are homeomorphic to $((S_2-D^2) \times [0, 1], (S_2-D^2) \times \{0\})$.
	Since $H_2=H_2'$ and $H_4=H_4'$, $V_3=V_3'$.

	By Claim \ref{cl3}, 1-handles of $H_3-V_3$ and $H_3'-V_3'$ are local for $V_3=V_3'$.
	Also, by Claim \ref{cl4}, 1-handles connecting $H_3$ and $H_2$ (resp. $H_2'$ and $H_3'$) is local for $V_3$ (resp. $V_3'$).
	If necessary, we perform type 0 stabilization of $H_3$ or $H_3'$ until genus of $H_3$ and $H_3'$ are the same. 
	After that, we perform handle sliding 1-handles of $H_3-V_3$ on $V_3=V_3'$ until $H_3-V_3=H_3'-V_3'$.
	Since $V_3=V_3'$, $H_3=H_3'$.

	Finally we have $H_1=H_1'$ automatically from $H_1=H_1'$, $H_2=H_2'$ and $H_4=H_4'$.
	This implies that partitions of two multibranched handlebody decompositions are isotopic to each other.
\end{proof}
%====================================================================================================================
\section{characterization of 3-manifolds with multibranched handlebody decompositions with four handlebodies}
	In this section,  we characterize 3-manifolds with multibranched handlebody decompositions with four handlebodies. 
	We consider the case where the genera of handlebodies are at most one. 
	First, we consider the decompositions such that one of the handlebodies is a 3-ball.
	\begin{prop}\label{prop1}
		Let $M$ be a closed, connected, orientable 3-manifold.
		If $M$ has a type-$(0, g_2, g_3, g_4)$ multibranched handlebody decomposition, then $M$ has a type-$(g_2, g_3, g_4)$ decomposition.
	\end{prop}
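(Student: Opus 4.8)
The plan is to \emph{absorb} the ball $H_1$ into its neighbours one elementary piece at a time, always attaching or detaching balls along disks so that no genus is ever created. First I would record the combinatorial data on $\partial H_1$. Since $H_1$ is a $3$-ball, its boundary is a $2$-sphere, and the branch loci lying on it form a disjoint union of simple closed curves $c_1,\dots,c_m$ that cut $\partial H_1=S^2$ into regions; each region belongs to exactly one $F_{1i}$ and I colour it by the corresponding index $i\in\{2,3,4\}$. Because disjoint circles on a sphere produce a dual \emph{tree} $T$ whose nodes are the regions and whose edges are the $c_t$, two adjacent regions always carry distinct colours, a leaf of $T$ is a disk, and an internal node of degree $d$ is a planar surface with $d$ boundary circles.

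The engine of the argument is a single reduction move that prunes a leaf of $T$. Let $R_0$ be a leaf region: a disk contained in some $F_{1i}$, bounded by one branch curve $c_0=H_1\cap H_i\cap H_j$, whose other side is a region of colour $j$. Since $R_0$ is an innermost disk face of the ball, the third sheet $F_{ij}$ meets $R_0$ only along $c_0$, and I would slide $c_0$ across the disk $R_0$: concretely, push the wall $F_{ij}$ so that it sweeps over the collar $R_0\times[0,\varepsilon]$ that $H_i$ occupies just beneath $R_0$, letting $H_j$ occupy the swept slab. The effect is to remove $c_0$ and to merge $R_0$ into its colour-$j$ neighbour, so that $T$ loses one edge. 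I would then check that this is genus-preserving: $H_1$ stays a ball (a collar slab is removed), $H_i$ loses the collar $R_0\times[0,\varepsilon]$ which is a ball glued to it along the disk $R_0$, and $H_j$ gains that slab glued along a disk neighbourhood of $c_0$; hence all three genera are unchanged, and the partition stays a valid multibranched surface with the former branch curve $c_0$ now an interior point of the enlarged wall between $H_1$ and $H_j$. When the two boundary curves of an \emph{annular} region carry different third indices, the same simplification is available for free through Lemma \ref{lem1}, which trades the annulus in $F_{1i}$ for one in an opposite face while preserving regular neighbourhoods.

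Iterating the pruning move strictly decreases the number of branch curves on $\partial H_1$, so after finitely many steps $T$ has a single edge. At that stage $\partial H_1$ is the union of two disks $F_{1i}$ and $F_{1j}$ glued along one branch curve $c$, and $F_{1k}=\emptyset$ for the remaining index $k$. I would finish by absorbing the whole ball into $H_i$, setting $K_i=H_i\cup H_1$; since $H_1$ is attached to $H_i$ along the single disk $F_{1i}$, $K_i$ is again a handlebody of genus $g_i$. The curve $c$ becomes a regular curve of the surface $K_i\cap H_j=F_{ij}\cup F_{1j}$, the faces $F_{ik}$ and $F_{jk}$ are untouched, and the triple locus $K_i\cap H_j\cap H_k=H_i\cap H_j\cap H_k$ is still a union of simple closed curves. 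Thus $M=K_i\cup H_j\cup H_k$ is a type-$(g_2,g_3,g_4)$ decomposition.

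The main obstacle is the pruning move itself. Everything hinges on showing that sliding a branch curve across an innermost disk can be realised (if necessary with an IX/XI move) as a modification of the decomposition that simultaneously (i) keeps each $H_\bullet$ a handlebody of the \emph{same} genus and (ii) keeps the partition a multibranched surface whose remaining branch loci are simple closed curves. The delicate point is the local model near $c_0$: one must verify that the slab transferred to $H_j$ is attached along a disk, and that no new branch locus or non-surface intersection is introduced. One must also separately dispose of annular regions whose two boundary curves share the same third index, which fall outside the hypothesis of Lemma \ref{lem1} and have to be cancelled by a direct isotopy of the parallel branch curves.
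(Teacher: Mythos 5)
Your reduction scheme stands or falls with the pruning move, and the pruning move as stated does not work: the transferred slab is \emph{not} glued to $H_j$ along a disk. The collar $R_0\times[0,\varepsilon]\subset H_i$ meets $H_j$ exactly along its lateral annulus $\partial R_0\times[0,\varepsilon]\subset F_{ij}$; a regular neighbourhood of the circle $c_0$ in $\partial H_j$ is an annulus, not a disk. Hence $H_j\cup\bigl(R_0\times[0,\varepsilon]\bigr)$ is the result of attaching a \emph{2-handle} to $H_j$ along $c_0$, and its topology depends on how $c_0$ sits in $\partial H_j$: the new boundary is the surgery of $\partial H_j$ along $c_0$, so the genus can drop, the boundary can disconnect, and the result need not be a handlebody at all. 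A concrete counterexample: let $M=V\cup_T W$ be a lens space with Heegaard torus $T$, fix a disk $D\subset T$, and inside a collar $T\times[0,1]$ set $H_1=D\times[0,1]$ and $H_3=\overline{(T\setminus D)}\times[0,1]$, with $H_2\cong V$ and $H_4\cong W$ the complementary solid tori. This is a type-$(0,1,2,1)$ multibranched handlebody decomposition, and $R_0=D\times\{0\}$ is a leaf disk of your tree with $i=2$, $j=3$, $c_0=\partial D\times\{0\}$. Since $c_0$ is an essential separating curve on the genus-2 surface $\partial H_3$, your move converts $H_3$ into a manifold with two torus boundary components and fundamental group $\mathbb{Z}\oplus\mathbb{Z}$ (essentially $T^2\times I$), which is not a handlebody. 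So item (i) in your ``main obstacle'' paragraph is not a verification that was merely left out; it is false for the move you propose, and inserting IX/XI moves cannot repair it, since those moves never change the homeomorphism types of the complementary pieces. (Your fallback for annuli with equal third indices, ``cancelled by a direct isotopy,'' is likewise unjustified, but it is secondary.)

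The paper avoids this trap by never attaching anything along an annulus: its only primitive is to tube a \emph{ball} onto a handlebody along an arc running in a wall (a boundary connected sum, realized as a regular neighbourhood $N$ of an arc, with $N$ drilled out of the handlebodies it passes through), which always preserves genus. When $F_{14}=\emptyset$ the paper tubes all of $H_1$ onto the disjoint handlebody $H_4$ through an arc in $F_{23}$; otherwise it cuts the ball $H_1$ into sub-balls along disks dual to $\partial N(F_{12})$ and distributes those sub-balls among $H_2$, $H_3$, $H_4$ by such tubes, checking at each stage that every attachment is along disks. Your final absorption step ($K_i=H_i\cup H_1$ glued along a single disk) is exactly this kind of genus-preserving operation, and is fine; if you want to salvage the tree-pruning picture, the move to iterate must likewise transfer whole sub-balls of $H_1$ by tubing along arcs, not slide a branch curve across a disk, which is a 2-handle attachment in disguise.
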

	\begin{proof}
		After performing XI moves, we can assume that all branched loci are tribranched.
		$H_1$ is a 3-ball.
		Suppose that $F_{14}=\emptyset$.
		Then $\partial H_4=F_{24}\cup F_{34}$.
		If one of $F_{24}$ and $F_{34}$ is an emptyset, $M$ is not connected.
		This is a contradiction.
		Then each of $F_{24}$ and $F_{34}$ is not an emptyset.
		If any components of $F_{23}$ do not intersect component of $F_{24}$, $\partial F_{24}$ is contained $\partial F_{12}$. %\partial H_2上でのものを考えている。
		This contradicts that $F_{14}=\emptyset$.
		Hence any components of $F$ of $F_{23}$ have a boundary component which is also a boundary component of $F_{24}$.
		Then we can take an arc properly embedded in the component of $F_{23}$ which connects $H_1$ and $H_4$.
		Let $N$ be a regular neighborhood of such arc.
		Then, let $H_2'=H_2 - N$, $H_3'=H_3 - N$ and $H_4'=H_4\cup N\cup H_1$.
		Since $H_4\cap N$ and $N\cap H_1$ is a disk and $H_4\cap H_1=\emptyset$, $H_i'\cong H_i$ for $i=2, 3, 4$.
		Hence $M$ has a type-$(g_2, g_3, g_4)$ decomposition.
		
		Then, we can suppose that $F_{1i}\neq\emptyset$ for $i=2, 3, 4$.
	 	We take a regular neighborhood  $N(F_{12})$  of $F_{12}$ in $\partial H_1$ so that branched loci contained in $N(F_{12})$ is only $\partial F_{12}$.
		Hence $N(F_{12})-F_{12}$ is a union of annuli which are the regular neighborhood of $\partial F_{12}$ in $F_{13}$ or $F_{14}$. 
		Let $A_1, . . . , A_k$ be such annuli.
		There are mutually disjoint disks properly embedded in $H_1$ whose boundaries are components of $\partial N(F_{12})$.
		Such disks cut open $H_1$ into some 3-balls.
		We call such 3-balls $C_1, . . . , C_n$ if their boundary contain the component of $F_{12}$.
		Otherwise, we call $C_1', . . . , C_m'$.
		
		There exist properly embedded essential arcs in each components of $N(F_{12})-F_{12}=A_1\cup\cdots\cup A_k$.
		We take a subset of such arcs $\{\alpha_1, . . . , \alpha_m\}$ so that one of the endpoints of $\alpha_i$ are contained in  $C_i'$.
		Let $N$ be a regular neighborhood of $\alpha_1\cup \cdots\cup \alpha_m$.
		Let $H_2'=H_2\cup N\cup C_1'\cup \cdots \cup C_m'$, $H_3'=H_3-N$, $H_4'=H_4-N$.
		Since $H_2\cap N$ are disks and $N\cap C_i'$ is a disk, $H_i'\cong H_i$ for $i=2, 3, 4$.
		
		Next, we shall consider the 3-balls $C_1, . . . , C_n$.
		Suppose that at least one of the 3-ball $C_i$ does not contain the subsurface of $F_{13}$.
		Let $C_1$ be a such 3-ball.
		If there is a component of $F_{14}$ which  has an intersection with $C_1$  intersects $F_{13}$, 
		we can take an arc properly embedded in $F_{14}-(F_{14}\cap C_1)$ which connects $C_1$ and $H_3$.
		Let $N'$ be a regular neighborhood of a such arc.
		Then we can replace $H_3'=H_3\cap N \cap C_1$.
		Next, Suppose that any components of $F_{14}$ which has an intersection with $C_1$ do not  intersect $F_{13}$.
		Then the components of $F_{14}$ in $\partial C_1$ adjacent to a component of $F_{12}$.
		Let $F'$ be the component of $F_{14}$ in $\partial C_1$.
		Let $C_2$ be a 3-ball which contains a component of $F_{12}$ adjacent to $F'$.
		Suppose $\partial C_2$ contains a subsurface of $F_{13}$.
		We can take a regular neighborhood $N(F')$ of $F'$ in $\partial C_2$.
		Then there is a disk which cuts open $C_2$ into two  3-balls. Let  $C_2^1$ be one of the 3-ball which contains subsurface of $F'$ and  $C_2^2$ the other.
		We can take an arc which connect $H_3$ and $C_2^1$, and an arc properly embedded in $F'-(F'\cap C_2)$ which connects $C_2^1$ and $C_1$.
		Let $N'$ be a regular neighborhood of such arcs.
		Then we can replace $H_3'=H_3\cap N'\cap (C_1\cap C_2^1)$.
		Even if $C_2$ does not contains a subsurface of $F_{13}$, we can proceed the steps above. 
		Hence  we can assume any $C_i$ contains $F_{13}$ and $F_{14}$ for $i=1, . . . , n$.

		The intersection $C_i\cap H_2'$ equals to $(C_i\cap N)\cup (C_i\cap (C_1'\cup\cdots\cup C_m'))\cup S_i$ where $S_i$ is a component of $F_{12}$ in $C_i$.
		Then $C_i\cap (H_3\cup H_4)$ equals to $C_i\cap((A_1\cup\cdots \cup A_k)-N)$.
		If $\alpha_i$ is contained in $A_k$,  $N(\alpha_i)$ cuts open $A_k$ into a disk.
		This implies that some of the intersections $C_i\cap (H_3\cup H_4)$ are disks and the others are annuli.
		Let $F^i_j$ be intersections $C_i$ and $H_j'$ for $j=2, 3, 4$.
		Then $\partial C_i$ can be seen as in Figure \ref{Fig_3}.
		\begin{figure}[h]
					\centering
					\includegraphics[scale=0.9]{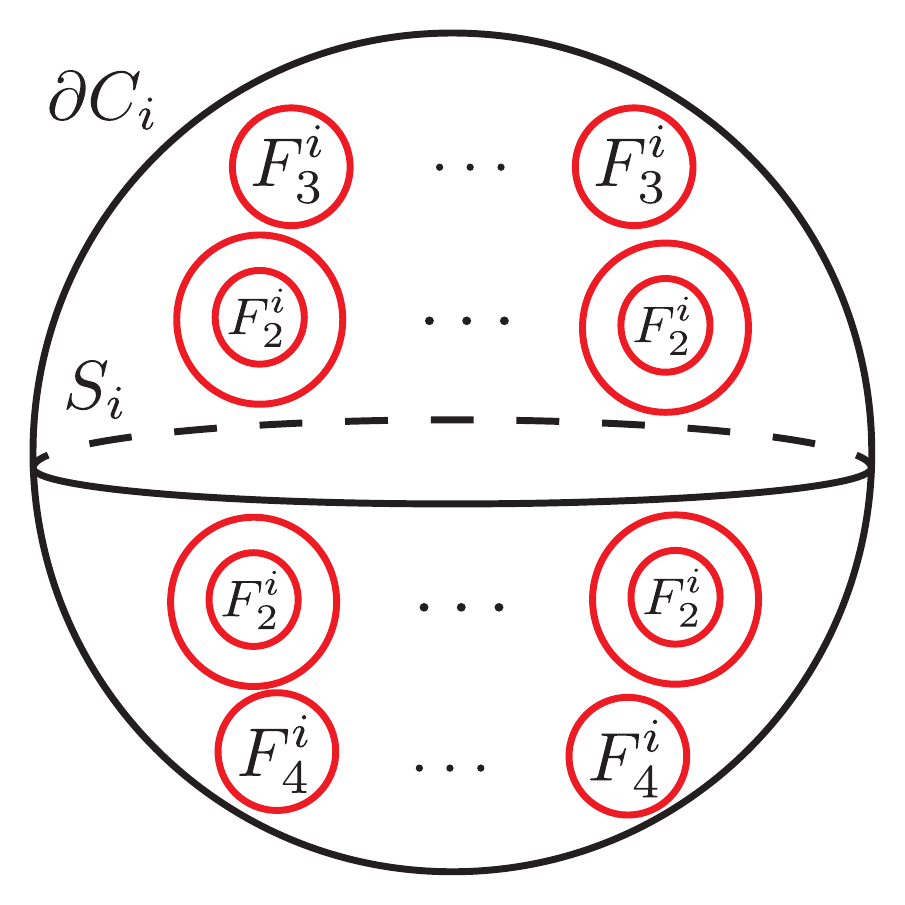}
					\caption
					{$\partial C_i$ intersects  $H_3'$ and $H_4'$ in annuli and disks.}
					\label{Fig_3}
		\end{figure}
		
		We take  a regular neighborhood $N(F^i_4)$ of  $F^i_4$ so that branched loci in $N(F^i_4)$ is only $\partial F^i_4$ for $i=1, . . . , n$.
		We can take mutually disjoint disks properly embedded in $C_i$ whose boundary equals to the components of $\partial N(F^i_4)$ which is contained in $F_{12}$ for $i=1, . . . , n$.
		Such disks cuts open $(C_i-N)$ into 3-balls for $i=1, . . . , n$.
		
		Let $B_1, . . . , B_l$ be such 3-balls.
		Some of $B_1, . . . , B_l$ contains $S_i$ for $i=1, . . . , n$.
		Suppose that $\partial B_i$ contains $S_i$ for $i=1, . . . , n$. %and $(C_i-N)-(B_1\cup\cdots\cup B_l)$.
		There are mutually disjoint arcs $\{\alpha_1, . . . , \alpha_l\}$ properly embedded in $F_{12}-N(F^i_4)$ so that $\alpha_i$ connect $F$ and $B_i'$ for $i=n+1, . . . , l$ where $F$ is a one of the component of $F^i_{3}$ (See Figure \ref{Fig_4}).
		\begin{figure}[h]
					\centering
					\includegraphics[scale=0.9]{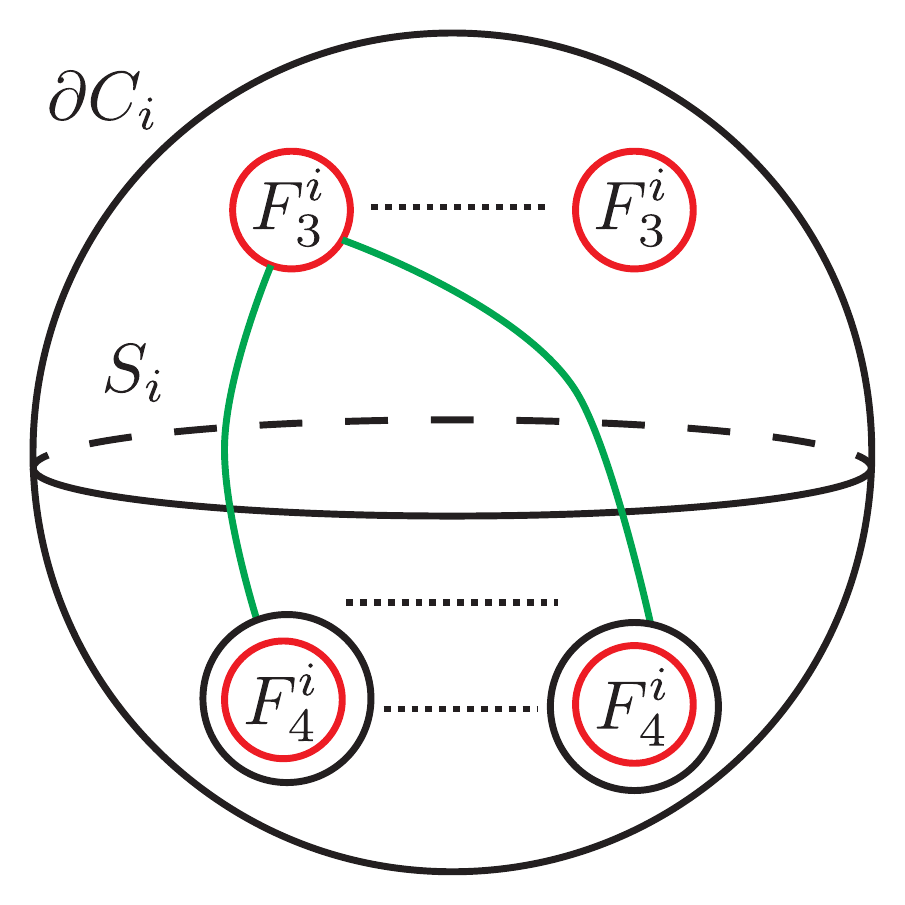}
					\caption
					{We take arcs connects $F^i_{3}$ and $F^i_4$. Such arcs connect $H_3'$ and $B_i$ for $i\neq 1, . . . , n$.}
					\label{Fig_4}
		\end{figure}
		Let $N'$ be a regular neighborhood of such arcs.
		Also there is an essential arc properly embedded in $N(F^i_{4})-F^i_{4}$ for $i=1, . . . , n$.
		Let $N''$ be a regular neighborhood of such arcs.
		$H_3''=H_3'\cup N'\cup ((B_1-N'')\cup\cdots\cup (B_l-N''))$, $H_2''=H_2'$ and $H_4''=H_4\cup N''\cup((C_1-N-N')\cup\cdots \cup (C_n-N-N'))$
		Since each of $H_3'\cap N'$ and $B_i\cap N'$ is a disk for $i=1, . . . , l$, $H_3''\cong H_3$.
		Also since each of $H_4'\cap N''$ and $(C_i-N-N')\cap N''$ is a disk for $i=1, . . . , n$, $H_4''\cong H_4$.
		Since $H_i''$ has no self intersection for $i=2, 3, 4$,  $H_2''\cup H_3''\cup H_4''$ is a type-$(g_2, g_3, g_4)$ decomposition.
	\end{proof}
	
	By Proposition \ref{prop1}, we obtain the following Proposition.

	\begin{prop}\label{prop_1}
	Let $M$ be a closed, connected, orientable 3-manifold. Then the following is satisfied.
		\begin{enumerate}
			\item[(1)] If $M$ has a type-$(0, 0, 0, 0)$ decomposition, then $M$ has a type-$(0, 0, 0)$ decomposition
			\item[(2)] If $M$ has a type-$(0, 0, 0, 1)$ decomposition, then $M$ has a type-$(0, 0, 1)$ decomposition.
			\item[(3)] If $M$ has a type-$(0, 0, 1, 1)$ decomposition, then $M$ has a type-$(0, 1, 1)$ decomposition.
			\item[(4)] If $M$ has a type-$(0, 1, 1, 1)$ decomposition, then $M$ has a type-$(1, 1, 1)$ decomposition.
		\end{enumerate}
	\end{prop}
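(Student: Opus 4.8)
The plan is to recognize that Proposition \ref{prop_1} is nothing more than the genus-$\le 1$ specialization of Proposition \ref{prop1}. In each of the four cases (1)--(4) the first entry of the type is $0$, so by definition the hypothesis gives a type-$(0, g_2, g_3, g_4)$ multibranched handlebody decomposition in which $H_1$ is a $3$-ball. Proposition \ref{prop1} then applies verbatim and produces a type-$(g_2, g_3, g_4)$ decomposition. Thus I would simply invoke Proposition \ref{prop1} once for each case.

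Concretely, I would run through the list and read off the genera. For (1), a type-$(0,0,0,0)$ decomposition is the instance $(g_2,g_3,g_4)=(0,0,0)$, so Proposition \ref{prop1} yields a type-$(0,0,0)$ decomposition. For (2), taking $(g_2,g_3,g_4)=(0,0,1)$ gives a type-$(0,0,1)$ decomposition; for (3), $(g_2,g_3,g_4)=(0,1,1)$ gives a type-$(0,1,1)$ decomposition; and for (4), $(g_2,g_3,g_4)=(1,1,1)$ gives a type-$(1,1,1)$ decomposition. Since Proposition \ref{prop1} is stated for arbitrary $g_2, g_3, g_4$, no separate argument for each case is required.

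I do not expect any genuine obstacle here: all of the geometric content has already been discharged in the proof of Proposition \ref{prop1}, namely absorbing the $3$-ball $H_1$ into its neighbouring handlebodies via regular neighborhoods of arcs while keeping each resulting $H_i'$ homeomorphic to $H_i$ and free of self-intersection. The only point to confirm is the bookkeeping that the three output genera are exactly $(g_2, g_3, g_4)$; this is immediate from the construction, where $H_2, H_3, H_4$ are enlarged only by $1$-handles whose attaching (co-core) disks are the disks $H_i\cap N$ and $N\cap C$ appearing in Proposition \ref{prop1}, so their genera are unaffected. Hence each of the implications (1)--(4) follows at once.
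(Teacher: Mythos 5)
Your proposal is correct and matches the paper exactly: the paper gives no separate argument for Proposition \ref{prop_1}, simply deriving it from Proposition \ref{prop1} with the remark ``By Proposition \ref{prop1}, we obtain the following Proposition,'' which is precisely your case-by-case specialization to $(g_2,g_3,g_4)\in\{(0,0,0),(0,0,1),(0,1,1),(1,1,1)\}$.
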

	\setcounter{section}{5}
	Gomez-Larra\~naga studied handlebody decompositions in \cite{G}. He characterized 3-manifolds which admit decompositions with handlebodies of genera at most one.  Let $\mathbb B$ be a connected sum of a finite number of  $S^2\times S^1$'s,  let $\mathbb L$ and $\mathbb L_i$ be lens spaces, and let $\mathbb S(3)$ be a Seifert manifold  with at most three exceptional fibers. He showed the following proposition in \cite{G}.
	
\begin{prop}[\cite{G}]\label{GTh}Let $M$ be a closed, connected, orientable 3-manifold.
\begin{enumerate}

\item[(1)]
$M$ has a type-$(0,0,0)$ decomposition if and only if $M$ is homeomorphic to $\mathbb B$.
\item[(2)]
$M$ has a type-$(0,0,1)$ decomposition if and only if $M$ is homeomorphic to $\mathbb B$ or $\mathbb B \# \mathbb L$.
\item[(3)]
$M$ has a type-$(0,1,1)$ decomposition if and only if $M$ is homeomorphic to $\mathbb B$ or $\mathbb B \# \mathbb L$ or $\mathbb B \# \mathbb L_{1} \# \mathbb L_{2}$.
\item[(4)]
$M$ has a type-$(1,1,1)$ decomposition if and only if $M$ is homeomorphic to $\mathbb B$ or $\mathbb B \# \mathbb L$ or $\mathbb B \# \mathbb L_{1} \# \mathbb L_{2}$ or $\mathbb B \# \mathbb L_{1} \# \mathbb L_{2} \# \mathbb L_{3} $ or $\mathbb B \# \mathbb S(3)$.
\end{enumerate}
\end{prop}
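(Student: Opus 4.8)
I will prove the two directions of each equivalence separately. For the realizability (``if'') direction, I would assemble the required type-$(g_1,g_2,g_3)$ decompositions from three building blocks. First, $\mathbb{B}=\#_k(S^2\times S^1)$ (with $k=0$ giving $S^3$) is a union of three $3$-cells, which is exactly a type-$(0,0,0)$ decomposition. Second, a lens space $\mathbb{L}$ has a genus-one Heegaard splitting $V_1\cup V_2$; cutting $V_2$ along two disjoint meridian disks splits it into two balls $B_1,B_2$, so $\mathbb{L}=V_1\cup B_1\cup B_2$ is a type-$(1,0,0)$ decomposition. Connected sums of decompositions, performed by boundary-connected-summing corresponding handlebodies across the summing sphere, add the genera componentwise, and (using the freedom to relabel which handlebody is the solid torus in each lens factor) this yields $\mathbb{B}\#\mathbb{L}_1\#\cdots\#\mathbb{L}_r$ with any type whose budget $g_1+g_2+g_3$ is at least $r$. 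Finally, for the type-$(1,1,1)$ case I would realize a Seifert manifold $\mathbb{S}(3)$ over $S^2$ with three exceptional fibers as three solid tori: decompose the base orbifold $S^2(a_1,a_2,a_3)$ into three disks, each containing one cone point and all meeting along edges through two common vertices, and take preimages under the Seifert projection. Each preimage is a fibered solid torus, the pairwise intersections are annuli, and the two fibers over the common vertices are the (degree-$3$) branch loci. This is the only entry on the lists that is not a connected sum, and it genuinely uses all three genus-one handles.

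For the restriction (``only if'') direction, let $M=H_1\cup H_2\cup H_3$ be a type-$(g_1,g_2,g_3)$ decomposition with each $g_i\le1$. The base case is $g_1=g_2=g_3=0$: here $M$ is a union of three $3$-cells, so by the theorem of G\'omez-Larra\~naga and Gonz\'alez-Acu\~na such an $M$ has free fundamental group and is therefore homeomorphic to $\#_k(S^2\times S^1)=\mathbb{B}$. This settles case (1) and anchors an induction on the number $t=g_1+g_2+g_3$ of genus-one handlebodies. Note that the number of $S^2\times S^1$ summands is \emph{not} controlled by $t$ (already case (1) allows arbitrarily many), so the induction must instead bound only the torsion part and the single possible Seifert factor.

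For the inductive step, suppose some $H_i$ is a solid torus and let $D\subset H_i$ be a meridian disk, isotoped so that $\partial D$ meets the partition efficiently. Compressing $H_i$ along $D$ replaces it by a ball and so lowers $t$ by one; I would show that the effect on $M$ is one of two things. Either $\partial D$ bounds a disk on the outside of $H_i$, in which case $D$ closes up to a reducing sphere and $M=\mathbb{L}\#M''$, where $M''$ inherits a type-$(g_1,\dots,g_i-1,\dots,g_3)$ decomposition and the induction applies; or no such outside disk exists, and the solid tori are glued in the symmetric, non-separating fashion that forces $M$ to be irreducible. In the irreducible case --- which can only arise once the budget is large enough, i.e. in case (4) with $(g_1,g_2,g_3)=(1,1,1)$ --- I would identify $M$ directly as a Seifert fibered space over $S^2$ with at most three exceptional fibers, using the recognition of small Seifert pieces (the annular pairwise intersections and the two branch circles are exactly the fibered picture described in the realizability step). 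Feeding the outcome back into the prime decomposition $M=P_1\#\cdots\#P_k$ and using the classification of closed orientable $3$-manifolds with cyclic fundamental group (lens spaces) then caps the number of lens summands at $0,1,2,3$ in cases (1)--(4) respectively, and allows the single Seifert summand only in case (4); this matches the stated lists exactly.

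The main obstacle is precisely the irreducible type-$(1,1,1)$ case and the emergence of $\mathbb{S}(3)$. Every other entry is a connected sum that can be read off by finding reducing spheres and peeling off lens-space summands, but when the three solid tori are glued so that no meridian disk completes to a reducing sphere, $M$ is prime and one must \emph{positively} recognize it as Seifert fibered rather than merely estimate its fundamental group. Showing that this is the only irreducible manifold that occurs, and that it is always Seifert over $S^2$ with at most three exceptional fibers, is the heart of the argument; the lower-budget cases (1)--(3) are easier because there are too few genus-one handles to support an irreducible piece, so the decomposition is always reducible and collapses to $\mathbb{B}$ connect-summed with at most two lens spaces.
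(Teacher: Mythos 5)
First, a point of order: the paper contains no proof of this statement. Proposition \ref{GTh} is imported verbatim from G\'omez-Larra\~naga \cite{G} and used as a black box (together with Proposition \ref{prop_1}) to deduce Theorem 1.2, so your proposal can only be judged against the classical argument, not against anything in this paper. Your realizability direction is correct in outline and is the standard one: $\mathbb B$ is a union of three balls, a lens space is a Heegaard solid torus plus two balls obtained by cutting the other solid torus along two meridian disks, $\mathbb S(3)$ is the union of the three vertical solid tori over a splitting of the base $S^2(a_1,a_2,a_3)$ into three disks with one cone point each, and connected sums of decompositions (summed along balls centered on triple curves) add genera componentwise.

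Your restriction direction, however, has a genuine gap: the dichotomy at the heart of the induction is wrong on both branches. If $D$ is a meridian disk of a solid torus $H_i$ and $\partial D$ bounds a disk $D'$ in the exterior, then $H_i\cup N(D')$ is a solid torus with a $2$-handle attached along its \emph{meridian}; its boundary is a sphere, and capping off yields $S^2\times S^1$. So this move only ever splits off $S^2\times S^1$ summands --- it can never produce the lens space summands that populate the lists, and the claim ``$M=\mathbb L\# M''$'' on the first branch is false. Lens summands arise from a different mechanism: a \emph{disk component} $D$ of some partition surface $F_{jk}$ whose boundary lies on $\partial H_i$ and winds $p\geq 2$ times around $H_i$, so that $H_i\cup N(D)$ is a punctured lens space; this is exactly the mechanism this paper itself uses in the type-$(1,1,1,1)$ proof following Claim \ref{cl5}. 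The second branch is equally unsupported: absence of exterior disks for meridians does not force irreducibility (reducing spheres for lens summands are not of that form), and ``recognition of small Seifert pieces'' cannot be invoked, since the positive identification of the Seifert fibration is precisely what must be proven. The correct bridge is combinatorial: if no $F_{ij}$ has a disk component, then because $\chi(\partial H_i)$ equals the sum of the Euler characteristics of the pieces tiling it (and two-sided surfaces in an orientable $3$-manifold are orientable, excluding M\"obius bands), every $\partial H_i$ is forced to be a torus --- which is the honest reason the irreducible case is confined to case (4) --- and every $F_{ij}$ is a union of essential annuli; then either some annulus core is a meridian of an adjacent solid torus, which again splits off a summand, or the circle foliations of the boundary tori by annulus cores extend to fibrations of the three solid tori, exhibiting $M$ as Seifert fibered over $S^2$ with at most three exceptional fibers. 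Finally, even on the reducible branch you silently assume that after cutting along the reducing sphere and capping, the complementary piece ``inherits'' a decomposition of strictly smaller type; the sphere meets several handlebodies, and reassembling the cut pieces into handlebodies with disjoint interiors is real work --- it is exactly the kind of redistribution argument that occupies Proposition \ref{prop1} and Claim \ref{cl5} of this paper --- so it cannot simply be asserted.
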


	By Proposition \ref{GTh} and Proposition \ref{prop_1}, we can obtain the following theorem.
	\setcounter{section}{1}
	
	\setcounter{thm}{1}
	\begin{thm}\label{thm2}
	Let $M$ be a closed, connected, orientable 3-manifold. Then the following is satisfied.
		\begin{enumerate}

\item[(1)]
$M$ has a type-$(0, 0, 0, 0)$ decomposition if and only if $M$ is homeomorphic to $\mathbb B$.
\item[(2)]
$M$ has a type-$(0, 0, 0, 1)$ decomposition if and only if $M$ is homeomorphic to $\mathbb B$ or $\mathbb B \# \mathbb L$.
\item[(3)]
$M$ has a type-$(0, 0, 1, 1)$ decomposition if and only if $M$ is homeomorphic to $\mathbb B$ or  $\mathbb B \# \mathbb L$ or $\mathbb B \# \mathbb L_{1} \# \mathbb L_{2}$.
\item[(4)]
$M$ has a type-$(0, 1, 1, 1)$ decomposition if and only if $M$ is homeomorphic to $\mathbb B$ or $\mathbb B \# \mathbb L$ or $\mathbb B \# \mathbb L_{1} \# \mathbb L_{2}$ or $\mathbb B \# \mathbb L_{1} \# \mathbb L_{2} \# \mathbb L_{3} $ or $\mathbb B \# \mathbb S(3)$.
\end{enumerate}
	\end{thm}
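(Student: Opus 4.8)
The plan is to obtain each of the four equivalences by combining Proposition~\ref{prop_1} with Gomez-Larra\~naga's classification, Proposition~\ref{GTh}, the indices of the two being arranged to match exactly. For the \emph{only if} direction of, say, part~(3), I would begin with a type-$(0,0,1,1)$ decomposition of $M$; Proposition~\ref{prop_1}(3) then produces a type-$(0,1,1)$ decomposition, and Proposition~\ref{GTh}(3) forces $M$ to be homeomorphic to $\mathbb B$, $\mathbb B\#\mathbb L$, or $\mathbb B\#\mathbb L_1\#\mathbb L_2$. The identical two-step reduction — delete one genus-$0$ handlebody via Proposition~\ref{prop_1}, then quote the corresponding item of Proposition~\ref{GTh} — handles all four parts, since type-$(0,0,0,0)\to(0,0,0)$, type-$(0,0,0,1)\to(0,0,1)$, type-$(0,0,1,1)\to(0,1,1)$, and type-$(0,1,1,1)\to(1,1,1)$. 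Thus the forward implications are essentially immediate once the earlier results are in hand.

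For the \emph{if} direction I need the reverse passage, from three handlebodies back to four, which is the converse of Proposition~\ref{prop1} and is where the genuine work lies. Given that $M$ is one of the listed manifolds, Proposition~\ref{GTh} supplies a type-$(g_2,g_3,g_4)$ decomposition $M=H_2\cup H_3\cup H_4$ with $(g_2,g_3,g_4)$ the matching triple. I would manufacture the extra genus-$0$ handlebody by inserting a small ball: choose an interior point $p$ of some non-empty pairwise-intersection surface, say $F_{23}$, and let $H_1=B$ be a small regular-neighborhood ball of $p$. The surface $F_{23}$ cuts $B$ into two half-balls $B\cap H_2$ and $B\cap H_3$, so setting $H_2'=H_2\setminus \mathrm{int}\,B$, $H_3'=H_3\setminus \mathrm{int}\,B$, and $H_4'=H_4$ yields $M=H_1\cup H_2'\cup H_3'\cup H_4'$, a type-$(0,g_2,g_3,g_4)$ decomposition after relabelling so that the ball comes first.

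The main obstacle, and the only step that is not a direct citation, is verifying that this ball insertion is legitimate. One checks that each half-ball meets $\partial H_i$ ($i=2,3$) in a single disk, so that $H_i'$ remains a handlebody of the same genus $g_i$; that $H_1\cap H_2'$ and $H_1\cap H_3'$ are disks meeting along the circle $\partial B\cap F_{23}$, which becomes a new normal tribranch locus of degree $3$; and that these, together with $H_1\cap H_4'=\emptyset$ and the untouched surfaces $F_{24},F_{34},F_{23}\setminus B$, assemble into a multibranched surface all of whose branch loci are normal. That some $F_{ij}$ has non-empty interior is automatic, since $\partial H_2$ is a closed surface contained in $F_{23}\cup F_{24}$, so at least one of these cannot be a mere union of curves. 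I expect this difficulty to be bookkeeping rather than conceptual: confirming that the local surgery preserves the defining conditions of a multibranched handlebody decomposition, which is precisely the converse of Proposition~\ref{prop1} in the case where the inserted piece is a $3$-ball.
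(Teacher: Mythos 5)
Your proposal is correct and takes essentially the same route as the paper: the paper's entire proof is the one-line observation that the theorem follows from Proposition~\ref{prop_1} (the four-to-three handlebody reduction) together with Gomez-Larra\~naga's Proposition~\ref{GTh}, which is exactly your forward direction. The paper leaves the ``if'' direction implicit, and your ball-insertion construction is precisely the missing converse step; it works as you describe, the only unstated point being that a decomposition into three handlebodies automatically has a tribranched-surface partition (with three embedded handlebodies the triple intersection can have no vertices and must be a union of circles), so your local check near the inserted ball is indeed all that needs verifying.
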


Next we characterize 3-manifolds with type-$(1, 1, 1, 1)$ decomposition.
\begin{thm}
Let $M$ be a closed, connected, orientable 3-manifold. 
Then $M$ has a type-$(1, 1, 1, 1)$ decomposition if and only if $M$ is homeomorphic to $\mathbb B$ or  $\mathbb B \# \mathbb L$ or $\mathbb B \# \mathbb L_{1} \# \mathbb L_{2}$ or $\mathbb B \# \mathbb L_{1} \# \mathbb L_{2} \# \mathbb L_{3} $ or  $\mathbb B \# \mathbb L_{1} \# \mathbb L_{2} \# \mathbb L_{3} \# \mathbb L_{4}$ or $\mathbb B \# \mathbb S(4)$.
\end{thm}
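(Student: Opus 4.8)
The plan is to reduce this theorem to the already-established type-$(1,1,1)$ case of Gomez-Larra\~naga (Proposition \ref{GTh}(4)), exactly in the way Theorem \ref{thm2} was obtained from Proposition \ref{prop_1}. The crucial missing ingredient is an analogue of Proposition \ref{prop1} one genus higher: namely, that a type-$(1, g_2, g_3, g_4)$ multibranched handlebody decomposition can be converted into a type-$(g_2, g_3, g_4)$ decomposition at the cost of connect-summing with a single lens space. First I would prove this reduction lemma, and then the theorem follows by bookkeeping.

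The forward direction (the ``only if'') is where the work lies. Suppose $M = H_1 \cup H_2 \cup H_3 \cup H_4$ is a type-$(1,1,1,1)$ decomposition. After XI moves I assume all branched loci are tribranched. The handlebody $H_1$ is a solid torus; the idea is to split off its genus. I would choose a meridian disk $D$ of $H_1$ whose boundary meets the partition in a controlled way, and cut $H_1$ along $D$ to obtain a $3$-ball $B$. If $\partial D$ can be isotoped into the interior of a single face $F_{1j}$, then cutting along $D$ turns the solid torus $H_1$ into a ball and merely connect-sums the adjacent handlebody $H_j$ with a lens space (the lens space being determined by how the meridian and the surgery curve sit), after which $H_1$ is a ball and I can invoke Proposition \ref{prop1} to absorb it, producing a type-$(g_2,g_3,g_4)$ decomposition of a summand $M' $ with $M \cong M' \# \mathbb L$. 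Iterating over $H_1, H_2, H_3, H_4$ in turn peels off up to four lens-space summands and finally lands on a type-$(1,1,1)$ or type-$(0,*,*,*)$ configuration, to which Proposition \ref{GTh}(4) and Theorem \ref{thm2}(4) apply. Collecting the possibilities — zero through four lens summands, together with the Seifert case — yields precisely the list $\mathbb B$, $\mathbb B\#\mathbb L$, \ldots, $\mathbb B \# \mathbb L_1 \#\mathbb L_2\#\mathbb L_3\#\mathbb L_4$, and $\mathbb B \# \mathbb S(4)$. The Seifert piece $\mathbb S(4)$ arises from the genuinely four-fold-symmetric configuration, the direct analogue of how $\mathbb S(3)$ appears in the type-$(1,1,1)$ case.

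The reverse direction (the ``if'') is the easier half: I must exhibit an explicit type-$(1,1,1,1)$ decomposition for each manifold on the list. For $\mathbb B$ and each $\mathbb B\#\mathbb L_1\#\cdots\#\mathbb L_k$ with $k\le 4$, I would start from a type-$(1,1,1)$ or type-$(0,*,*)$ decomposition realizing the manifold (guaranteed by Proposition \ref{GTh}), then apply a type $0$ stabilization to raise a genus-$0$ handlebody to genus one, or introduce a fourth handlebody by a type $1$ stabilization, adjusting genera until all four are genus one. For $\mathbb B\#\mathbb S(4)$ I would build the standard Seifert fibered model directly: a Seifert manifold over $S^2$ with four exceptional fibers admits a symmetric decomposition into four solid tori, one around each exceptional fiber, meeting along a multibranched surface with tribranch loci, which is the defining geometric picture.

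The main obstacle I expect is the forward reduction lemma, specifically controlling the meridian disk of $H_1$ so that its boundary lies in a single face and the cut genuinely yields a \emph{lens space} summand rather than something more complicated. The delicate point is that $\partial D$ may unavoidably cross branched loci, so I would first use type $1$ stabilizations and IX/XI moves (as in Lemma \ref{lem1} and the proof of Theorem \ref{stbthm}) to simplify the faces $F_{1j}$ to disks and annuli before selecting $D$; only then is the surgery description clean enough to identify the new summand as a lens space and to confirm that the remaining decomposition of the complementary summand is again a multibranched handlebody decomposition of the required type.
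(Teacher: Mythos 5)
Your plan hinges on a reduction lemma that is false. You claim that a type-$(1,g_2,g_3,g_4)$ decomposition can always be traded for a type-$(g_2,g_3,g_4)$ decomposition of a summand $M'$ with $M\cong M'\#\mathbb L$. If that were true, every manifold admitting a type-$(1,1,1,1)$ decomposition would carry a lens space (or $S^2\times S^1$) summand; but $\mathbb B\#\mathbb S(4)$ with $\mathbb B$ trivial and $\mathbb S(4)$ an irreducible Seifert manifold over $S^2$ with four exceptional fibers is on the list precisely because it admits a type-$(1,1,1,1)$ decomposition in which every face $F_{ij}$ is an annulus (the four solid tori are fibered neighborhoods of fibers), and by uniqueness of prime decomposition it has no such summand. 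In that decomposition no meridian disk of any $H_i$ can have boundary inside a single face, since the cores of the annular faces are regular fibers and hence non-meridional. Your proposed remedy --- first simplify the faces by type 1 stabilizations and IX/XI moves --- is exactly what you cannot do in the forward direction of a type characterization: a type 1 stabilization raises the genus of a handlebody by one (and type 0 raises two genera), so after stabilizing you no longer have a type-$(1,1,1,1)$ decomposition, and the stabilized object tells you nothing about this restricted class. This is why the paper's forward argument uses only XI/IX moves, which preserve the handlebodies up to isotopy, and why its key dichotomy is on the partition itself: either some $F_{ij}$ has a disk component, or all faces are annuli and $M$ is Seifert with at most four exceptional fibers.

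Even when a compressing disk does exist, your bookkeeping loses half the list. If $D$ is a meridian disk of $H_1$ with $\partial D\subset F_{1j}$ essential in the torus $\partial H_j$, then $N(D)$ is a 2-handle attached to $H_j$, so $H_j\cup N(D)$ is a punctured lens space; splitting it off turns \emph{both} $H_1$ and $H_j$ into balls, i.e.\ you land in type-$(0,0,1,1)$, not type-$(1,1,1)$. Iterating therefore yields at most three lens summands and can never produce $\mathbb B\#\mathbb L_1\#\mathbb L_2\#\mathbb L_3\#\mathbb L_4$. The paper instead compresses along a disk component $D$ of the partition $F_{ij}$ itself, whose boundary lies on $\partial H_k$ for $k\neq i,j$: such a disk is a 2-handle on the opposite handlebody $H_k$ only, so exactly one genus dies, giving $M\cong\mathbb L\#M'$ with $M'$ of type-$(0,1,1,1)$, and this is what allows four lens summands via Theorem \ref{thm2}(4). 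You also never address the case where the relevant disk has inessential boundary in $\partial H_k$: there the resulting sphere can be non-separating, the summand is $S^2\times S^1$ rather than a lens space, and rebuilding a type-$(1,1,1,1)$ decomposition of the complementary summand is the real work (this is Claim \ref{cl5}, which occupies most of the paper's proof). Finally, in the reverse direction your construction for $\mathbb B\#\mathbb L_1\#\cdots\#\mathbb L_4$ starts from a type-$(1,1,1)$ decomposition of that manifold, which does not exist by Proposition \ref{GTh}(4); the two genuinely new manifolds on the list require direct constructions rather than stabilizations of lower decompositions.
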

\setcounter{section}{5}
\begin{proof}
	Let $H_1\cup H_2\cup H_3\cup H_4$ be a type-$(1, 1, 1, 1)$ decomposition.
	If the number of branched loci of  a type-$(1, 1, 1, 1)$ decomposition is at most one, one of the handlebodies has self intersection.
	Hence the number of branched loci of  a type-$(1, 1, 1, 1)$ decomposition is at least two. 
	We perform XI  moves to deform degree four branched loci of this decomposition to tribranched loci.
	First, we consider the case where one of $F_{ij}$'s contains a disk component.
 	\begin{claim}\label{cl5}
		Let $D$ be a disk component of $F_{ij}$. 
		If $\partial D$ is inessential in $\partial H_k$ or $\partial H_l$ for $k\neq l$, then  $M\cong M'\#S^2\times S^1$ or $M\cong M'\#\mathbb{L}$ where $M'$ has either a type-$(1, 1, 1, 1)$ or a type-$(0, 0, 1, 1)$ decomposition respectively whose partition is tribranched surface. 	
	\end{claim}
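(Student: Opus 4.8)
The plan is to turn the hypothesis into an embedded $2$-sphere in $M$ and then read off the summand from how that sphere separates $M$, treating the two conclusions as the separating and non-separating alternatives. As we have already arranged in the present proof, all branch loci are tribranched, so the single curve $\partial D$ is a tribranch locus $\partial D = H_i\cap H_j\cap H_m$ with $m\in\{k,l\}$. Because $D$ is a disk region of $F_{ij}\subset\partial H_i\cap\partial H_j$, its boundary already bounds the disk $D$ in each of the tori $\partial H_i$ and $\partial H_j$; thus the only content of the hypothesis is that $\partial D$ is also inessential in the third torus $\partial H_m$, bounding a disk $\Delta\subset\partial H_m$. I would then form the embedded sphere $S=D\cup_{\partial D}\Delta$ and push $\Delta$ slightly into the solid torus $H_m$ to a properly embedded disk $\Delta'$. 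Since $\partial\Delta'$ is inessential in $\partial H_m$, the disk $\Delta'$ is boundary parallel and cuts a $3$-ball $B$ off $H_m$, leaving $H_m\setminus B$ a solid torus; the ball $B$ is the tool that lets me realize the surgery on $S$ without destroying the handlebody structure.

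If $S$ is non-separating, then $M\cong M'\#(S^2\times S^1)$. Compressing $M$ along $S$ and capping the two resulting spheres with balls can be arranged, using $B$, so that each $H_\ell$ remains a genus-one solid torus; hence $M'$ inherits a type-$(1,1,1,1)$ decomposition, and after the XI/IX normalization of any newly created annulus (Lemma~\ref{lem1}) its partition is again tribranched. This is the first alternative, matching the $S^2\times S^1$ case.

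If $S$ is separating it writes $M=M_1\#M_2$, where the local model at $\partial D$ (in which $S$ runs along the $F_{ij}$-sheet and one of the two $\partial H_m$-sheets) shows that $S$ divides the solid-torus sectors from the interior of $H_m$, so the summand $M_2$ is assembled, through the ball $B$, from two of the four solid tori. Such a piece has Heegaard genus at most one, hence is $S^3$, $S^2\times S^1$, or a lens space. The $S^3$ case means $S$ bounds a ball, so $D$ can be isotoped away and the number of disk components of the partition drops (this case is absorbed by an outer induction), while the $S^2\times S^1$ case is folded into the $\mathbb B$-factor. The remaining case gives $M\cong M'\#\mathbb L$; in forming $M'$ the two solid tori consumed by the summand are replaced by balls, whereas $H_m\setminus B$ and the fourth handlebody survive as genus-one handlebodies, so $M'$ carries a type-$(0,0,1,1)$ decomposition, again made tribranched by Lemma~\ref{lem1}.

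The delicate part, rather than the sphere dichotomy itself, is the identification of the separating summand together with the bookkeeping of the descended decomposition. The disk $\Delta$ need not be innermost and may meet all of $F_{im}$, $F_{jm}$ and $F_{mn}$, so I must control precisely which handlebodies lie on each side of $S$, verify that \emph{exactly} two solid tori are swallowed (so that the type is $(0,0,1,1)$ and not $(0,1,1,1)$), and check that capping through $B$ creates no degree-$4$ branch locus; keeping the partition tribranched is exactly where Lemma~\ref{lem1} is invoked to eliminate the annulus that the surgery produces. Equally, showing that the separating summand is a genuine lens space and not a higher-genus manifold, i.e.\ that the two consumed solid tori capped through $B$ realize a genus-one Heegaard splitting, is where the tribranchedness together with the inessentiality of $\partial D$ in $\partial H_m$ (which pins down the gluing slope) must be used, and I expect this to be the main obstacle of the argument.
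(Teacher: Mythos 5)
Your opening move (building the sphere $S=D\cup\Delta$ from the disk region and the disk it bounds in the third torus) agrees with the paper, but the dichotomy you hang on it --- separating versus non-separating --- is not how the claim gets proved, and the branch that carries the lens-space conclusion is left with its central step unproven. With the paper's normalization ($D\subset F_{12}$, $\partial D$ inessential in $\partial H_3$), the paper never analyzes a separating sphere at all. It first isolates the degenerate configurations, namely $F_{12}$ a single disk with exactly one of $F_{23},F_{24}$ (or of $F_{13},F_{14}$) empty; there the lens-space summand is produced with no sphere surgery whatsoever, by taking a regular neighborhood $h$ of a meridian disk of $H_2$ and attaching it to the solid torus $H_3$ as a $2$-handle, so that $h\cup H_3$ is a punctured lens space, while $H_2-h$ and the capping ball become the two genus-$0$ pieces and $H_1,H_4$ survive as the genus-$1$ pieces of the type-$(0,0,1,1)$ decomposition. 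In every remaining configuration the paper shows $S$ is \emph{non-separating} by exhibiting an explicit closed curve meeting $S$ once (through $H_1\cup H_2$ and closed up through $H_4$ when all of $F_{13},F_{14},F_{23},F_{24}$ are nonempty; through $H_1$ and $H_2$ when $F_{12}$ has a second component). Your separating branch instead rests on the assertion that one side of $S$ is ``assembled from two of the four solid tori'' and has Heegaard genus at most one. Since $\Delta$ may contain regions of all of $F_{13}$, $F_{23}$, $F_{34}$, nothing controls what a separating $S$ cuts off, and you yourself flag exactly this as ``the main obstacle'' without discharging it; that is a genuine gap, and it sits precisely where the paper's combinatorial case split is designed so the question never arises. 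The sub-cases you sketch inside that branch also do not fit the statement: the claim is not set up as an induction (so the $S^3$ case cannot be ``absorbed by an outer induction''), and if the cut-off piece were $S^2\times S^1$ the claim would require a type-$(1,1,1,1)$ decomposition of $M'$, which your separating branch never constructs.

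The non-separating branch is where the paper's real labor lies, and you compress it into ``can be arranged, using $B$, so that each $H_\ell$ remains a genus-one solid torus.'' After cutting along $N(S)$ and capping with two balls $C_1,C_2$, the ball $C_2$ meets $H_1$, $H_2$ and $H_4$ in an uncontrolled pattern; the paper spends over a page cutting $C_2$ into sub-balls along disks and reabsorbing them into the handlebodies by tubes along carefully chosen arcs, checking at each step that the pieces stay handlebodies of the right genus. Moreover, tribranchedness is preserved there because those arcs are chosen disjoint from the branch loci, not by invoking Lemma~\ref{lem1}: that lemma only trades an annulus region whose two boundary circles are tribranch loci for an annulus in a different $F_{kl}$, and it is not the mechanism that keeps this surgery's partition tribranched. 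So both alternatives of your dichotomy are essentially asserted rather than proved, and the dichotomy itself is the wrong organizing principle for this claim.
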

	\begin{proof}[Proof of Claim]
		Let $D$ be a disk component of $F_{12}$ and $\partial D$ be inessential in $\partial H_3$.
		Suppose that  exactly one of $F_{2i}$ is an emptyset for $i=3, 4$ and $F_{12}$ is only a disk.
		Assume that $F_{24}$ is an emptyset.
		Since $\partial H_2=F_{12}\cup F_{23}$ and $F_{12}$ is a disk, $F_{23}$ is a punctured torus.
		Then we can take a meridian disk of $H_2$ whose boundary is contained in $F_{23}$ and its regular neighborhood $h$ in $H_2$.
		After attaching $h$ to $H_3$ as a 2-handle, we can obtain a punctured lens space $h\cup H_3$.
		Then $M=M'\# \mathbb{L}$ where $\mathbb{L}$ is a lens space which is obtained from $h\cup H_3$ by capping off.
		$M'=H_1\cup (H_2-h)\cup H_4\cup B$ is a type-$(0, 0, 1, 1)$ decomposition where $B$ is a 3-ball since $H_2-h$ is a 3-ball.
		If exactly one of the $F_{1i}$ is an emptyset for $i=3, 4$ and $F_{12}$ is a disk, we can show samely as above. 
		We only remains two cases. 
		One is a case where each of  $F_{1i}$ and $F_{2i}$ is not emptyset  for $i=3, 4$ and the other is a case where $F_{12}$ has at least two components.
		
		Next, we suppose that  each of $F_{1i}$ and $F_{2i}$ are not emptyset for $i=3, 4$ or $F_{12}$ has at least two components.
		Let $D'$ be a disk in $\partial H_3$ such that $\partial D=\partial D'$ and $S=D\cup D'$.
		If  each of $F_{1i}$ and $F_{2i}$ are not emptyset for $i=3, 4$, we can take an arc properly embedded in $H_1\cup H_2$ so that one of the endpoints of the arc is contained in a component of $F_{14}$ and the other is contained in a component $F_{24}$ and it intersects $D$ exactly once.
		Also we can take an arc properly embedded in $H_4$ so that the end points of the each arcs are the same.
		If $F_{12}$ has at least two components, we can take an arc properly embedded in $H_1$ so that one of the endpoints if the arc is contained in $D$ and the other is contained in a component of $F_{12}$.
		Also we can take an arc properly embedded in $H_2$ so that  the end points of the each arcs are the same.
		Hence $S=D\cup D'$ is a non-separating sphere in $M$. Hence $M\cong M'\# S^2\times S^1$.
		To show a claim, we will show $M'$ has a type-$(1, 1, 1, 1)$ decomposition.

		Let $N(D)$ be a regular neighborhood of $D$ in $H_1\cup H_2$ and $N(D) \cap H_1=D_1$ and $N(D)\cap H_2=D_2$.
		Then $D_1$ and $D_2$ are disks.
		
		We can take a properly embedded disk $D_1'$ in $H_3$ such that $\partial D_1'=\partial D_1$ and disk $D_2'$ in $\partial H_3$ such that $\partial D_2=\partial D_2'$.
		Hence $D_2'$ may contain components each of $F_{3i}$ for $i=1, 2, 4$.
		
		We can take a regular neighborhood $N(S)$ of $S$ so that  $S_1=D_1\cup D_1'$ and $S_2=D_2\cup D_2'$ where $\partial N(S)=S_1\cup S_2$. See Figure \ref{Fig_7}.
		
		\begin{figure}[h]
					\centering
					\includegraphics[scale=0.7]{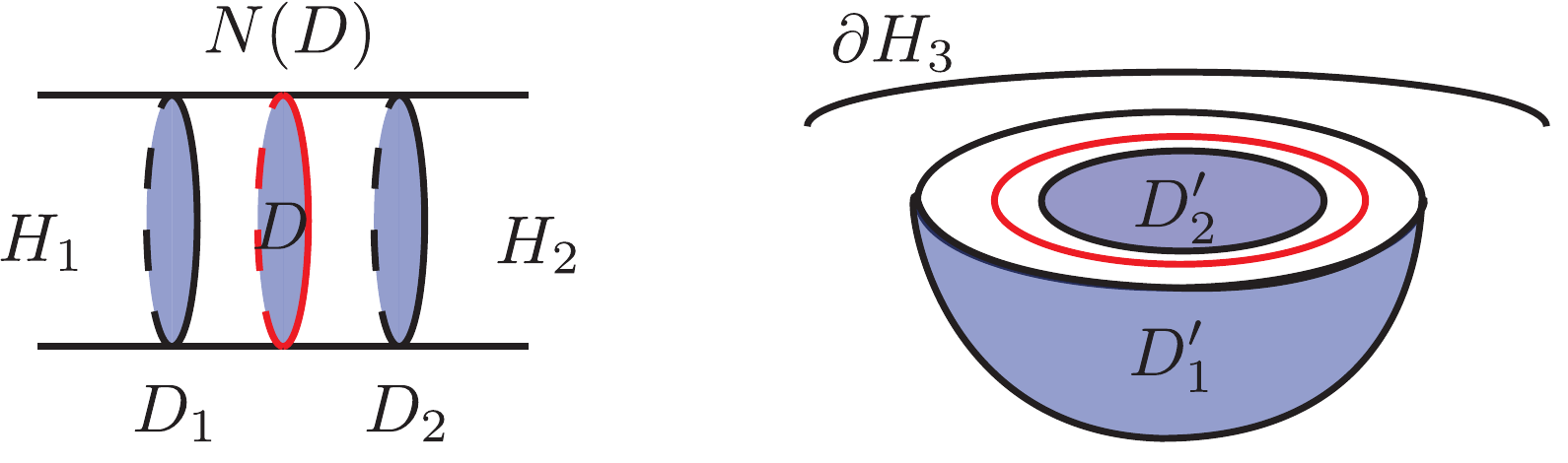}
					\caption
					{We take a regular neighborhood of $S$ so that $\partial N(S)=S_1\cup S_2$ satisfies that $S_1=D_1\cup D_1'$ and $S_2=D_2\cup D_2'$.}
					\label{Fig_7}
		\end{figure}
		Then $\partial (M-N(S))=S_1\cup S_2$.
		We cap off $M-N(S)$ by 3-balls $C_1$, $C_2$ with $\partial C_i=S_i$ for $i=1, 2$.
		Then $M'=M-N(S)\cup C_1\cup C_2$.
		Let $H_1'=(H_1-N(S))\cup C_1$, $H_2'=H_2-N(S)$, $H_4'=H_4$ and $H_3'=H_3-N(S)$.
		By definition, $M'=H_1'\cup H_2'\cup H_3'\cup H_4' \cup C_2$.
		 We note that $C_2$ may have  intersections with each of $H_i'$ for $i=1, 2, 4$.
		 %Hence $H_2\cup C_2$ has a self-intersection.
		 
		 Let $F_{i}=C_2\cap H_i'$ for $i=1, 2, 4$. Recall that $F_2$ contains a disk $D_2$.
		 Suppose that one of the $F_{i}=\emptyset$ for $i=1, 4$.
		 We can assume that $F_4=\emptyset$.
		 Let $F$ be a component of $F_1$.
		 Then there is a components of $F_{12}$ whose boundary contains one of the components of $\partial F$.
		 We note that such components $F'$ of $F_{12}$ is adjacent to $F_{i4}$ or $F_{i3}$ for $i=1, 2$.
		 Suppose that $F'$ is adjacent to  $F_{14}$.
		 Then we can take an arc $\alpha$ properly embedded the component of $F_{12}$ which connects $C_2$ and $H_4$.
		 Then we define $H_4''=H_4\cup N(\alpha)\cup C_2$  and $H_i''=H_i'$ for $i= 1, 2, 3$.
		 Since $\partial C_2$ does not have an intersection with $H_4'$, each of $H_i''$ does not have self intersection.
		 Then $H_1''\cup H_2''\cup H_3''\cup H_4''$ is a type-$(1, 1, 1, 1)$ decomposition.
		 
		 Hence we can assume that $F_i\neq \emptyset $ for $i=1, 2, 4$.
		 We can take a regular neighborhood $N(F_4)$ of $F_4$ which contains no branched loci other than $\partial F_{4}$ and properly embedded disks in $C_2$ whose boundaries are components  of $\partial N(F_4)$.
		 Such disks cut open $C_2$ into some 3-balls.
		 Let $B_1, . . . , B_n$ be such 3-balls whose boundaries  contain  components of $F_4$.
		 On the other hands, let $B_1', . . . , B_m'$ be such 3-balls whose boundaries does not contain components of $F_4$.
		 
		There exists a properly embedded essential arcs in each components of $N(F_{4})-F_{4}$.
		Let $\alpha_1, . . . , \alpha_m$ be a subset of such arcs such that $\alpha_i$ connects $H_4'$ and $B_i'$.
		Let $N$ be a regular neighborhood of arcs $\alpha_1, . . . , \alpha_m$ and $H_4''=H_4\cup N\cup B_1'\cup\cdots\cup B_m'$, $H_i''=H_i'-N$ for $i=1, 2, 3$. 
		It is clear that $H_i''\cong H_i$ for $i=1, 2, 3$.
		Since each of components of $N\cap H_4$ is a disk and $N\cap B_i$ is a disk for $i=1, . . . , m$, $H_4''\cong H_4$.
		Since each of $\alpha_1, . . . , \alpha_m$ does not intersects branched loci, the intersection of handlebodies is a tribranched surface.

		%ここでB_iにF_1が含まれないパターンを入れる必要がありそう。
		Suppose that one of $\partial B_i$'s does not contain $F_1$.
		Let $\partial B_1$ does not  contain a component of $F_1$.
		Then $\partial B_1$ consists of $F_1$ and $F_4$.
		There is a components of $F_{14}$ which is adjacent to a component of  $F_1$.
		Let $F''$ be a such component of $F_{14}$.
		$F''$ adjacent to a components of either $F_{12}$ or $F_{13}$.
		Suppose that $F''$ adjacent to $F_{13}$.
		Then we can take an arc $\beta$ properly embedded $F''$ which connects $B_1$ and $H_3$.
		We replace $H_3'$ by $H_3'\cup N(\beta)\cup B_1$ where $N(\beta)$ is a regular neighborhood of $\beta$.
		Since each of  $H_3'\cap N(\beta)$ and $N(\beta)\cap B_1$ is a disk, a homeomorphism type of $H_3'$ does not change by this operation.
		This operation sends a tribranched surface to a tribranched  surface (See Figure \ref{Fig_5}).
		\begin{figure}[h]
					\centering
					\includegraphics[scale=0.6]{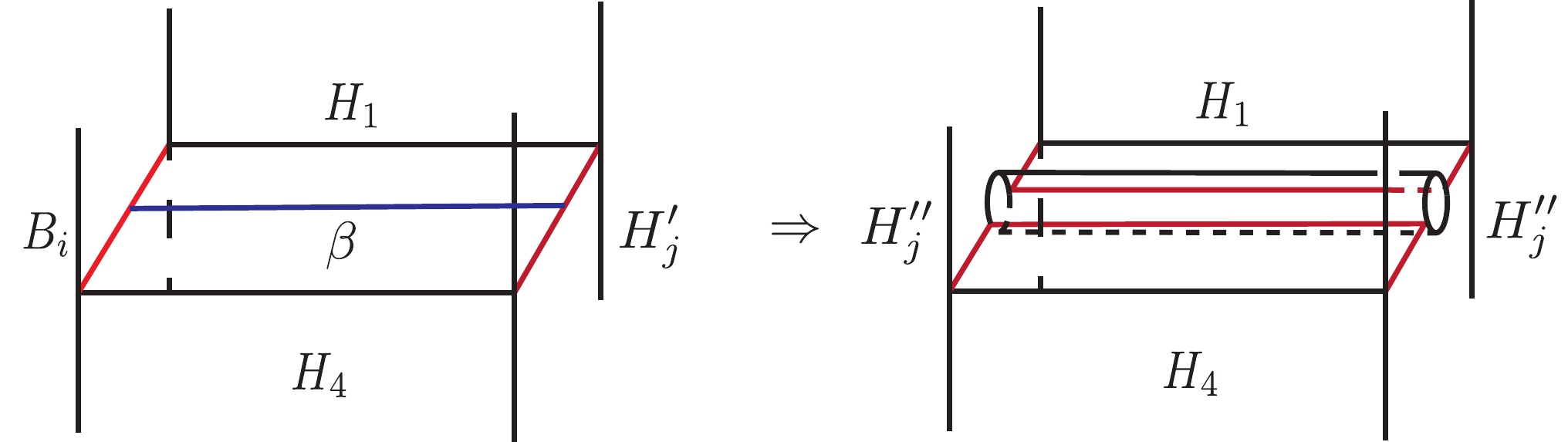}
					\caption
					{An operation which sends a tribranched surface to a tribranched surface}
					\label{Fig_5}
		\end{figure}
		Hence we can assume that each of $\partial B_i$'s satisfies that $F_j$ in $\partial B_i$ are not emptyset for  $i=1, . . . , n$, $j=1, 2, 4$.

		Let $N(F_1)$ be a regular neighborhood of $F_1$ in $\partial B_i$'s which contains no branched loci other than $\partial F_{1}$ for $i=1, . . . , n$.
		The components of $\partial N(F_1)$ in $F_4$ bounds disks properly embedded in $B_i$ for $i=1, . . . , n$.
		Such disks cuts open $B_i$ into some 3-balls for $i=1, . . . , n$.
		Let $B_1'', . . . , B_l''$ be such 3-balls whose boundary contain a component of $F_1$.
		We can take essential arcs properly embedded in $F_4-N(F_1)$ so that one of the endpoints is contained in $B_i''$ for $i=1, . . . , l$ the other is contained in $H_2$ since $F_j$ in $\partial B_i$ are not emptyset for  $i=1, . . . , n$, $j=1, 2, 4$.
		Let $\beta_1, . . . , \beta_l$ be such arcs such that $\beta_i$ connects $H_2$ and $B_i''$ for $i=1, . . . , l$ and $N'$ a regular neighborhood of union of such arcs.
		
		Also,  we can take essential arcs properly embedded in annuli $N(F_1)\cap F_4$.
		Let $\gamma_1, . . . , \gamma_n$ be a such arcs such that $\gamma_i$ connects $H_1$ and $B_i$ for $i=1, . . . , n$ and $N''$ be a regular neighborhood of union of such arcs.
		
		Let $H_1'''=H_1''\cap N''\cup B_1\cup\cdots B_n$, $H_2'''=H_2''\cup N'\cup B_1''\cup\cdots\cup B_l''$, $H_3'''=H_3''$
		and $H_4'''=H_4''$.
		Then $M'=H_1'''\cup H_2'''\cup H_3'''\cup H_4'''$.
		Since $N''\cap H_1''$ is a union of disks and $N''\cap B_i$ is a disk for $i=1, . . . ,n$, $H_1'''\cong H_1$.
		Also, Since $N'\cap H_2''$ is union of disks and $N'\cap B_i''$ is a disk for $i=1, . . . ,l$, $H_2'''\cong H_2$.
		Since each of $\beta_1, . . . , \beta_l$ and $\gamma_1, . . . , \gamma_n$ does not intersect the branched loci, the intersection of handlebodies is a tri-branched surface.
		Hence Then $M'=H_1'''\cup H_2'''\cup H_3'''\cup H_4'''$ is a type-$(1, 1, 1, 1)$ handlebody decomposition with tri-branched surface.
\end{proof}
	By Claim \ref{cl5}, we can assume that disk components of $F_{ij}$ is essential in $\partial H_k$ for $k\neq i, j$.
	If $F_{12}$ has a disk component $D$, we can take a regular neighborhood $N(D)$ of $D$ in $H_1\cup H_2$.
	If $\partial D$ is in $\partial H_4$, $H_4\cup N(D)$ is a punctured lens space $L$ since $\partial D$ is essential in $\partial H_3$.
	Then $M\cong Cap(L)\# M'$ where $M'$ is a capping off of $(H_1\cup H_2\cup H_3)-N(D)$.
	Hence $M'=H_1\cup H_2\cup H_3 \cup H_4'$ is a type-$(0, 1, 1, 1)$ decomposition where $H_4'$ is a 3-ball.
	This implies that $M\cong \mathbb{L}\#M'$ where $M'$ has a  type-$(0, 1, 1, 1)$ decomposition.
	
	If $F_{ij}$ has no disk component, $F_{ij}$ is annuli for $\{i, j\}\subset \{1, 2, 3, 4\}$.
	This implies that $M$ is a Seifert manifold with at most four singular fibers.
\end{proof}

We can see the difference between a multibranched handlebody decomposition and a handlebody decomposition by Theorem \ref{thm2}.
We can show that any orientable, closed 3-manifold admits a type-$(0, 0, 0, 0)$ handlebody decomposition. 
On the other hand, there are a lot of orientable, closed 3-manifold which does not admits type-$(0, 0, 0, 0)$ multibranched handlebody decomposition.

\section{Acknowledgement}
The author thanks to his supervisor Koya Shimokawa for very meaningful discussion and insight.
The author was partially supported by Grant-in-Aid for JSPS Research Fellow from JSPS KAKENHI Grant Number JP20J20545.


\begin{thebibliography}{99}
	\bibitem{R}
	K. Reidemeister, 
	Zur dreidimensionalen Topologie, 
	Abh. Math. Sem. Univ. Hamburg 9 (1933), 189-194.
	\bibitem{S}
	J. Singer, 
	Three-dimensional manifolds and their Heegaard diagrams, 
	Trans. Amer. Math. Soc. 35, (1933), 88-111.
	\bibitem {G} 
	J. C. G\'omez-Larra\~naga, 
	3-manifolds which are unions of three solid tori.
	Manuscripta Math. {\bf59} (1987), 325-330.
	\bibitem {GG} 
	J. C. G\'omez-Larra\~naga, F. Gonz\'alez-Acu\~na and W. Heil, 
	2-stratifold splines of closed 3-manifolds.
	Osaka J. Math. {\bf57} (2020), 267-277.
\bibitem{K}
	D. R. Koenig,
	Trisections in Three and Four Dimensions, 
	Doctor Thesis, University of California, (2017).
\bibitem{MO}
 S. Matsuzaki and M. Ozawa, Genera and minors of multibranched surfaces, Topology Appl. 230 (2017), 
621-638.
\bibitem{O}
M. Ozawa, A partial order on multi-branched surfaces in 3-manifolds,  Topology Appl. 272 (2020).
\bibitem{Poly}
	K. Ishihara, Y. Koda, R. Mishina, M. Ogawa, M. Ozawa, N. Sakata, K. Shimokawa
	Handlebody decomposition of 3-manifolds and polycontinuous patterns,  
	Proc. R. Soc. A {\bf478} (2021)
\bibitem {KOKK} 
	K. Ishihara, Y. Koda, M.Ozawa and K. Shimokawa.
	Neighborhood equivalence for multibranched surfaces in 3-manifolds.
	Topology Appl. {\bf 257} (2019), 11-21.
\bibitem {IM} 
	Y. Ito and M.Ogawa.
	Decompositions of the 3-sphere and lens spaces with three handlebodies
	J. Knot Theory. Ramif \bf{30}, No. 9 (2021)
	
\end{thebibliography}
\end{document}